\title{Normal sequences with given limits of multiple ergodic averages}
\date{}
\author{Lingmin Liao\\
\normalsize LAMA UMR 8050, CNRS, Universit\'e Paris-Est Cr\'eteil,\\
\normalsize 61 Avenue du G\'en\'eral de Gaulle, 94010 Cr\'eteil Cedex, France\\
\normalsize e-mail: lingmin.liao@u-pec.fr\\
Micha\l \ Rams \\
\normalsize Institute of Mathematics, Polish Academy of Sciences\\
\normalsize ul. \'Sniadeckich 8, 00-656 Warszawa, Poland\\
\normalsize e-mail: rams@impan.pl}
\theoremstyle{plain}
\newtheorem{lem}{Lemma}[section]
\newtheorem{prop}[lem]{Proposition}
\newtheorem{thm}[lem]{Theorem}
\newtheorem{cor}[lem]{Corollary}
\theoremstyle{definition}
\theoremstyle{remark}
\numberwithin{equation}{section}
\newcommand{\N}{\mathbb N}
\renewcommand{\epsilon}{\varepsilon}
\begin{document}

\maketitle

\def\thefootnote{}
\footnote{2010 {\it Mathematics
Subject Classification}: Primary 28A80, Secondary 11K16, 37B40}
\def\thefootnote{\arabic{footnote}}
\setcounter{footnote}{0}

\begin{abstract}
We are interested in the set of normal sequences in the space $\{0,1\}^\N$ with a given frequency of the pattern $11$ in the positions $k, 2k$. The topological entropy of such sets is determined. 
\end{abstract}

\section{Introduction and statement of results}


Recently, Fan, Liao, Ma \cite{FLM}, and Kifer \cite{K12} proposed to calculate the topological entropy spectrum of level sets of multiple ergodic averages. Here, the topological entropy means Bowen's topological entropy (in the sense of \cite{B73}, see the definition in Section \ref{sec:pre}) which can be defined for any subset, not necessarily invariant or closed. 

Let $\Sigma=\{0,1\}^{\mathbb{N}}$.  Among other questions, Fan, Liao, Ma \cite{FLM} asked for the topological entropy of 
\[
A_{\alpha}:=\Big\{(\omega_k)_1^{\infty}\in \Sigma : \lim_{n\to\infty} \frac{1}{n}\sum_{k=1}^{n} \omega_k\omega_{2k}=\alpha \Big\} \qquad (\alpha\in [0,1]).
\]
As a first step to solve the question, they also suggested to study a subset of $A_0$:
\[  A:=\Big\{(\omega_k)_1^{\infty}\in \Sigma : \omega_k\omega_{2k}=0 \quad \text{for all } k\geq 1\Big\}.\]
 The topological entropy of $A$ was later given by Kenyon, Peres and Solomyak \cite{KPS12}.
\begin{thm}[Kenyon-Peres-Solomyak]\label{thm:KPS} 
We have
\[
h_{\rm top} (A)=-\log (1-p)=0.562399...,  \]
where $p\in [0,1]$ is the unique solution of $$p^2=(1-p)^3.$$
\end{thm}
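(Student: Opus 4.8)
## Sketch of a proof

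I would prove $h_{\rm top}(A)=\log 2\cdot\dim_H A$ and then establish matching lower and upper bounds on $\dim_H A$; here we use the standard fact that, with the metric $d(\omega,\omega')=2^{-\min\{k\,:\,\omega_k\neq\omega'_k\}}$ on $\{0,1\}^{\mathbb N}$, Bowen's topological entropy of a set equals $\log 2$ times its Hausdorff dimension. The arithmetic input is the chain decomposition: each $k\ge 1$ is uniquely $k=2^{j}m$ with $m$ odd, so $\mathbb N$ is the disjoint union of the chains $\{m,2m,4m,\dots\}$ ($m$ odd), and the defining relations of $A$ say precisely that along every chain the word $(\omega_m,\omega_{2m},\omega_{4m},\dots)$ contains no block $11$. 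Thus $A$ is set-theoretically a product of golden mean shifts indexed by the odd integers, but a cylinder $[\omega_1\cdots\omega_n]$ does not respect this product: it determines the first $\ell_m(n):=\lfloor\log_2(n/m)\rfloor+1$ letters of the chain of $m$ for every odd $m\le n$, and $\sum_{m\text{ odd},\,m\le n}\ell_m(n)=n$. Writing $\xi^{(j)}_k:=\omega_{2^{j}(2k-1)}$ converts the constraints into $\xi^{(j)}_k\xi^{(j+1)}_k=0$ (so each ``column'' $k$ is a golden mean word read in the direction of increasing $j$), and $[\omega_1\cdots\omega_n]$ then reveals $\xi^{(j)}_k$ exactly when $(2k-1)2^{j}\le n$, i.e.\ roughly $n\,2^{-j-1}$ letters at level $j$.

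For the lower bound I would apply the mass distribution principle to the measure $\mu_p$ on $A$ under which the columns are independent and, inside a column, the letters $\xi^{(0)}_k,\xi^{(1)}_k,\dots$ form a Markov chain in $j$: $\xi^{(0)}_k$ is Bernoulli$(p)$, and given $\xi^{(j)}_k$ one sets $\xi^{(j+1)}_k=0$ if $\xi^{(j)}_k=1$ and lets $\xi^{(j+1)}_k$ be Bernoulli$(p)$ otherwise. With $c_0=1$ and $c_{j+1}=1-p\,c_j$ (the probability that the $j$-th letter of a column is not forced to vanish), the entropy of the first $\ell$ letters of a column is $H(p)\sum_{j=0}^{\ell-1}c_j$, where $H$ is the binary entropy. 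Since $-\log\mu_p([\omega_1\cdots\omega_n])$ is a sum over the $\approx n/2$ columns of independent contributions, each of size $O(\log n)$, Chebyshev's inequality and Borel--Cantelli (using that $-\log\mu_p([\omega_1\cdots\omega_n])$ is nondecreasing in $n$, to pass from a subsequence to every $n$) give, for $\mu_p$-a.e.\ $\omega$,
\[
\lim_{n\to\infty}\frac{-\log\mu_p([\omega_1\cdots\omega_n])}{n}=H(p)\sum_{j\ge 0}\frac{c_j}{2^{j+1}}=\frac{2H(p)}{2+p},
\]
the last equality by summing the geometric-type series $\sum_j c_j 2^{-j-1}$. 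The map $p\mapsto 2H(p)/(2+p)$ is maximised where $3\log(1-p)=2\log p$, i.e.\ where $p^{2}=(1-p)^{3}$; for that $p$ one checks $H(p)=-\tfrac{2+p}{2}\log(1-p)$, so the displayed limit equals $-\log(1-p)$. Hence $h_{\rm top}(A)\ge-\log(1-p)$.

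For the upper bound I would build a cover of $A$ by cylinders dual to $\mu_p$. Covering $A$ by all its length-$n$ cylinders gives weight $|A_n|e^{-sn}$, where $|A_n|$ is the number of words of length $n$ occurring as prefixes of points of $A$; this tends to $0$ only for $s$ exceeding $\lim_n\frac1n\log|A_n|=\sum_{\ell\ge 1}2^{-\ell-1}\log F_{\ell+2}$ (with $F_{\ell+2}$ the number of golden mean words of length $\ell$, a Fibonacci number), and this box-counting exponent is strictly larger than $-\log(1-p)$, so one must allow cylinders of non-uniform length. The device is the self-similarity obtained by splitting $\omega$ into its odd- and even-indexed subsequences $\xi$ and $\eta$: one has $A\cong\{(\xi,\eta):\eta\in A,\ \xi_i\eta_{2i-1}=0\ \forall i\}$, and a length-$2m$ cylinder of $\omega$ is the product of a length-$m$ cylinder of $\xi$ with a length-$m$ cylinder of $\eta$. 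A cover of the $\eta$-copy of $A$ thus lifts to a cover of $A$, each cylinder $[\eta_1\cdots\eta_m]$ giving rise to $2^{\#\{i\le m\,:\,\eta_{2i-1}=0\}}$ cylinders of $\omega$ of length $2m$; one then seeks covers whose ``profile'' (the joint statistics of the lengths and of these exponents) is a fixed point of this lifting, which forces $p^{2}=(1-p)^{3}$ once more and yields covers with $\sum_C e^{-s|C|}\to 0$ for every $s>-\log(1-p)$; equivalently, this identifies the branching number of the tree of admissible prefixes of $A$ as $1/(1-p)$. I expect this to be the main obstacle: the naive cover only sees the box-counting exponent, so extracting the strictly smaller value $-\log(1-p)$ forces the cylinder lengths to vary from chain to chain, and because the self-similar recursion rescales all lengths by $2$ these choices must be coordinated coherently across the whole infinite range of scales; making the resulting multi-scale cover genuinely efficient is the technical heart of the argument.
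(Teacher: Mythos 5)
First, note that the paper does not prove this statement at all: Theorem \ref{thm:KPS} is quoted from Kenyon--Peres--Solomyak \cite{KPS12}, so there is no internal proof to compare against; your proposal has to be judged against the known argument of \cite{KPS12}. Your lower bound is correct and is essentially the standard route (also the one this paper's Section \ref{sec:pre} mechanizes for its own theorems): decompose $\N$ into the dyadic chains $\{m,2m,4m,\dots\}$, put on $A$ the Markov measure $\mu_p$ that is independent across chains, compute the a.e.\ local entropy by a law-of-large-numbers argument plus a Lemma \ref{lem:PS}--type interpolation, and apply the Billingsley/Ma--Wen bound (Theorem \ref{lem:Bi}). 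Your arithmetic checks out: $\sum_j c_j2^{-j-1}=2/(2+p)$, the maximizer of $2H(p)/(2+p)$ satisfies $2\log p=3\log(1-p)$, i.e.\ $p^2=(1-p)^3$, and at that $p$ the value is $-\log(1-p)$. So $h_{\rm top}(A)\ge -\log(1-p)$ is established by your sketch.

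The upper bound, however, is where the actual content of the Kenyon--Peres--Solomyak theorem lies, and your proposal does not prove it. You correctly identify that uniform-depth cylinder covers only yield the box-counting exponent $\sum_{\ell\ge1}2^{-\ell-1}\log F_{\ell+2}$, which is strictly larger than $-\log(1-p)$, so non-uniform depths are unavoidable; but the ``profile fixed point'' cover you invoke is never constructed, and you yourself flag it as the unresolved technical heart. There is also a concrete slip in the one step you do write down: the number of length-$2m$ cylinders of $\omega$ sitting over a length-$m$ cylinder $[\eta_1\cdots\eta_m]$ is $2^{\#\{i\le m:\ \eta_{2i-1}=0\}}$, and the indices $\eta_{2i-1}$ with $2i-1>m$ are not determined by $[\eta_1\cdots\eta_m]$, so the claimed lifting of a cover of the $\eta$-copy of $A$ to a cover of $A$ is not well defined as stated; repairing this (e.g.\ by lifting depth-$(2m-1)$ information of $\eta$, or by organizing the recursion across scales as in \cite{KPS12}) is precisely the coherence-across-scales problem you defer. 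Note also that the Billingsley-type upper bound used elsewhere in this paper (showing a single measure has the right local entropy at \emph{every} point of the set, as in Lemma \ref{entropy-measure-all}) is not available for $A$ itself, since points of $A$ need not have well-defined digit frequencies; this is exactly why the upper bound here is genuinely harder. As it stands, then, the proposal proves only the lower bound; the upper bound remains a plausible but incomplete outline.
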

Enlightened by the idea of \cite{KPS12}, the question about the topological entropy of $A_\alpha$ was finally answered by Peres and Solomyak \cite{PeSo12}, and then in higher generality by Fan, Schmeling and Wu \cite{FSW16}.
\begin{thm}[Peres-Solomyak, Fan-Schmeling-Wu]\label{thm:PS}
For any $\alpha\in[0,1]$, we have
\[
h_{\rm top}( A_{\alpha})=-\log (1-p)- \frac{\alpha}{2} \log \frac{q(1-p)} {p(1-q)},
\] where $(p,q)\in [0,1]^2$ is the unique solution of the system 
 \[
 \left\{ 
 \begin{array}{ll}
\displaystyle p^2(1-q)=(1-p)^3,\\
 \displaystyle {2pq} =\alpha({2+p-q}). 
\end{array}\right.
\]
In particular, $h_{\rm top}(  A_{0})= h_{\rm top}(  A)$.
\end{thm}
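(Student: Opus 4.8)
The plan is to realize $h_{\rm top}(A_\alpha)$ as a variational value: to bound it from below by the Bowen entropy of an explicit measure carried by $A_\alpha$, to bound it from above by an efficient covering argument, and to read off the two stated equations from this variational problem. The combinatorial backbone is the partition of $\N$ into the \emph{dyadic chains} $C_m=\{m,2m,4m,\dots\}$, $m$ odd: the pair $(k,2k)$ sits at two consecutive sites of one chain, the chain $C_m$ meets $\{1,\dots,n\}$ in $\ell_m(n)=\lfloor\log_2(n/m)\rfloor+1$ sites, the number of odd $m$ with $\ell_m(n)=\ell$ is $\sim n\,2^{-\ell-1}$, and $\sum_\ell \ell\cdot n\,2^{-\ell-1}\sim n$. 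A preliminary step is to note that, with $g_n(\omega):=\sum_{2k\le n}\omega_k\omega_{2k}$ (which depends only on $\omega_1\cdots\omega_n$), one has $\omega\in A_\alpha$ if and only if $g_n(\omega)/n\to\alpha/2$, turning the defining condition into a condition on prefixes. One must bear in mind throughout that $h_{\rm top}$ is Bowen's entropy, and is in general \emph{strictly smaller} than the growth rate $\lim\frac1n\log\#\{\omega_1\cdots\omega_n:g_n=0\}$ of the admissible cylinders (already for $A$ the two values differ); isolating the correct, smaller number is the whole content of the theorem.

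\textbf{Lower bound and the algebra.} For parameters $p,q\in[0,1]$ I would consider the \emph{top-down Markov} measure $\mu_{p,q}$ on $\Sigma$ under which the chains $C_m$ are independent and each chain, read from its top (the odd generator) downwards, is a Markov chain: the top site is $\mathrm{Bernoulli}(p)$ and each subsequent site, conditioned on its parent (the site one level up), is $\mathrm{Bernoulli}(q)$ if the parent is $1$ and $\mathrm{Bernoulli}(p)$ if the parent is $0$. Summing the chain marginals against the chain-length statistics above, the asymptotic density of $1$'s equals $D=\tfrac{2p}{2-q+p}$ and the empirical frequency $\tfrac1n\sum_{k\le n}\omega_k\omega_{2k}$ converges a.s.\ to $qD=\tfrac{2pq}{2-q+p}$; by the strong law of large numbers over the independent chains, $\mu_{p,q}(A_\alpha)=1$ precisely when $2pq=\alpha(2+p-q)$. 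The Bowen-entropy lower bound then comes from the mass distribution principle: for $\mu_{p,q}$-a.e.\ $\omega$, $-\tfrac1n\log\mu_{p,q}([\omega_1\cdots\omega_n])=\tfrac1n\sum_{k\le n}\bigl(-\log\mathbb P_{\mu_{p,q}}(\omega_k\mid\omega_1\cdots\omega_{k-1})\bigr)$ converges to $h(\mu_{p,q})=\tfrac{(2-q)H(p)+p\,H(q)}{2-q+p}$, where $H(x)=-x\log x-(1-x)\log(1-x)$; hence $h_{\rm top}(A_\alpha)\ge h(\mu_{p,q})$ whenever $2pq=\alpha(2+p-q)$. Maximizing $h(\mu_{p,q})$ over the one-parameter family cut out by that constraint is then a calculus exercise: the first-order condition simplifies to $p^2(1-q)=(1-p)^3$, and substituting this relation back collapses $h(\mu_{p,q})$ to $-\log(1-p)-\tfrac\alpha2\log\tfrac{q(1-p)}{p(1-q)}$, which is the claimed value. (One also checks that the two equations have a unique solution in $[0,1]^2$, using monotonicity in $\alpha$.)

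\textbf{Upper bound.} The matching upper bound is the step I expect to be the main obstacle. Using the self-similar decomposition $\omega\leftrightarrow(\omega|_{\mathrm{odd}},\omega')$ with $\omega'_j=\omega_{2j}$, I would cover $A_\alpha$ by cylinders of \emph{varying} lengths, built recursively: for each word on the odd sites of $\{1,\dots,n\}$ (this contributes a controlled amount of local freedom and constrains $\omega'$), cover the admissible continuations of $\omega'$ by an efficient cover of the ``halved'' problem, and combine; to handle the \emph{global} frequency constraint, weight each configuration by $e^{t}$ per $11$-pattern it contains and optimize the Legendre parameter $t$ afterwards. Carrying the covering cost $\sum_i e^{-s\,n_i}$ through this recursion produces a self-referential estimate whose critical exponent is exactly $\max h(\mu_{p,q})$ from above — equivalently, one may invoke the Feng--Huang variational principle for Bowen entropy and show, by the same recursive and tilting estimates, that every $\nu$ with $\nu(A_\alpha)=1$ has a.e.\ lower local entropy at most this value. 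The difficulty is to run the recursion with simultaneous uniform control, over all scales, of the covering cost and of the empirical frequency; this is precisely where the naive growth rate is pushed down to the right value. As a consistency check and a natural base case, $\alpha=0$ forces $q=0$ and then $p^2=(1-p)^3$ and the logarithmic term disappears, recovering $h_{\rm top}(A_0)=-\log(1-p)=h_{\rm top}(A)$, i.e.\ Theorem~\ref{thm:KPS}.
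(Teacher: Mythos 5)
The statement you are proving is not actually proved in this paper: Theorem \ref{thm:PS} is quoted from Peres--Solomyak \cite{PeSo12} and Fan--Schmeling--Wu \cite{FSW16}, so there is no internal proof to compare against; your proposal has to be judged on its own. Its lower-bound half is sound and is essentially the standard route (and the same device the paper itself uses in Subsection \ref{subsec:measures} for its own theorems): the ``top-down Markov'' measure on dyadic chains is exactly the measure $\mu_{p,q}$ of Section \ref{sec:4}, your $D=2p/(2+p-q)$ is the paper's $\xi$, the constraint $2pq=\alpha(2+p-q)$ and the entropy value $\big((2-q)H(p)+pH(q)\big)/(2+p-q)$ are correct, and maximizing over the constraint does yield $p^2(1-q)=(1-p)^3$ and the stated closed form. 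Up to carrying out the calculus and the uniqueness check you wave at, this gives $h_{\rm top}(A_\alpha)\ge$ the claimed value via Theorem \ref{lem:Bi}.

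The genuine gap is the upper bound, which you yourself flag as ``the main obstacle'' and then only outline. Two specific problems: (i) the appeal to the Feng--Huang variational principle is not legitimate here, since that principle is for \emph{compact} subsets, while $A_\alpha$ is a non-closed, non-invariant Borel set (this is the whole reason Bowen's entropy, rather than a variational characterization, must be handled directly); and the reformulation ``every $\nu$ with $\nu(A_\alpha)=1$ has a.e.\ lower local entropy at most the value'' is not a reduction --- it is a restatement of exactly what must be proved. (ii) The recursive covering-with-tilting scheme is described but never closed: you do not produce the self-referential inequality, nor the uniform control over all scales of both the covering cost and the empirical $11$-frequency that you correctly identify as the crux. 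In \cite{PeSo12} (following \cite{KPS12}) this is done by an explicit block-by-block counting over the dyadic tree with a Legendre/tilting argument carried out quantitatively, and it is the substantive part of the theorem; as written, your upper bound is a plan rather than a proof, so the argument as a whole does not yet establish Theorem \ref{thm:PS}.
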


Another, interesting, related set is
\[
B:=\Big\{(\omega_k)_1^{\infty}\in \Sigma : \omega_k=\omega_{2k} \quad \text{for all } k\geq 1\Big\}.
\]

A sequence $ { \omega} \in \{0,1\}^\N$ is said to be simple normal if the frequency of the digit $0$ in the sequence is $1/2$. It is said to be normal if for all $n\in \mathbb{N}$, each word in { $\{0,1\}^{n}$} has frequency $1/2^n$. We denote the set of normal sequences by $\mathcal{N}$.

We are interested in the intersection of  $\mathcal{N}$  with the set $A_{\alpha}$ of given frequency of the pattern $11$ in $\omega_k\omega_{2k}$. For the usual ergodic (Birkhoff) averages the normal sequences all belong to one set in the multifractal decomposition -- the situation for multiple ergodic averages turns out to be very different. 

Our results are as follows:
\begin{thm} \label{thm:norm}
For $\alpha \leq 1/2$ we have
\[
h_{\rm top} (\mathcal{N} \cap A_{\alpha}) = {1 \over 2} \log 2+ {1 \over 2} H(2\alpha),
\] 
where $H(t)=-t\log t -(1-t)\log (1-t)$. For $\alpha>1/2$ the set $\mathcal{N} \cap A_{\alpha}$ is empty.

Further, 
\[ h_{\rm top}  (\mathcal{N} \cap  A)=h_{\rm top}  (\mathcal{N} \cap A_{0}) ={1\over 2} \log 2.\]
Moreover, ${\mathcal N}\cap B \subset A_{1/2}$ and 

\[
h_{\rm top}  (\mathcal{N} \cap  B)=h_{\rm top}  (\mathcal{N} \cap A_{1/2}) = h_{\rm top}  (B) ={1\over 2} \log 2.
\]

\end{thm}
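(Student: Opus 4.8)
The plan is to exploit the natural tree structure on $\N$ induced by the maps $k\mapsto 2k$. Every positive integer can be written uniquely as $k=2^j m$ with $m$ odd, so $\N$ splits into the disjoint "chains" $C_m=\{m,2m,4m,\dots\}$ indexed by odd $m$, and a sequence $\omega\in\Sigma$ is determined by its restriction to each chain. The constraints defining $A_\alpha$, $A$ and $B$ are all \emph{chain-local}: $\omega_k\omega_{2k}$ only involves two consecutive entries of one chain. First I would compute, for each dyadic window $[N,2N)$, how many of the pairs $(k,2k)$ with $k\le n$ lie in which chain, and observe that a positive proportion (asymptotically $1/2$ of all $k\le n$, since the odd numbers have density $1/2$) of the indices $k$ are themselves odd, hence are the "roots" of their chains. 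The key combinatorial identity to set up is that $\frac1n\sum_{k\le n}\omega_k\omega_{2k}$ equals, up to $o(1)$, the average over chains of the frequency of the pattern $11$ along each chain, weighted by chain length inside $[1,n]$.

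Next I would prove the three normality-constrained statements separately but by a common method: build the extremal sets by prescribing the entries on the "even part" of $\N$ (indices $k$ with $k$ even, equivalently the non-root entries of the chains) \emph{freely} subject only to the $A_\alpha$-type constraint, while using the entries on the odd indices (the chain roots, density $1/2$) as a reservoir of free bits to repair normality. Concretely: along each chain $C_m=\{m,2m,\dots\}$, fix $\omega_{2m},\omega_{4m},\dots$ so that the pattern $11$ occurs with the right asymptotic frequency within the chain — for $A_\alpha$ with $\alpha\le 1/2$ one takes roughly a $2\alpha$-fraction of consecutive pairs among $\omega_{2^jm},\omega_{2^{j+1}m}$ for $j\ge1$ to be $(1,1)$ and fills the rest to avoid creating extra $11$'s; for $A$ (resp.\ $B$) one takes the all-admissible pattern with no two consecutive $1$'s (resp.\ all entries in a chain equal). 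Since the even indices carry density $1/2$ and there is a bijection between chains and odd numbers, the entropy produced by these free choices on the even part is exactly $\tfrac12$ of the entropy of the corresponding \emph{one-sided} constrained shift: $\tfrac12\log 2$ in the unconstrained/all-equal cases, and $\tfrac12\big(\tfrac12\log2+\tfrac12 H(2\alpha)\big)$... here one must be careful — I would instead directly count: the number of admissible assignments on the even indices in $[1,n]$ with pattern-$11$ frequency $\alpha$ is $2^{(n/2)(1+o(1))}\binom{n/2}{\alpha n}(1+o(1))^{\,}$-type, giving exponential growth rate $\tfrac12\log 2+\tfrac12 H(2\alpha)$, because a $2\alpha$-fraction of the $n/2$ pair-slots carry the forced pattern. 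The upper bound for $h_{\rm top}$ then comes from the standard covering-number estimate for Bowen entropy: any $\omega\in\mathcal N\cap A_\alpha$ has its even-index restriction constrained as above, contributing at most that rate, while the odd-index restriction, being normal in its own right (or at least simply normal), contributes at most $\tfrac12\log 2$; but these two halves are not independent for a \emph{single} $\omega$, and the normality of $\omega$ forces the odd part to compensate the even part's deviation from normality, which \emph{reduces} the count — so the true maximum is achieved by balancing, and a Lagrange-multiplier / large-deviations computation pins it at the stated value. The containment ${\mathcal N}\cap B\subset A_{1/2}$ is immediate: if $\omega_k=\omega_{2k}$ for all $k$ then $\omega_k\omega_{2k}=\omega_k$, and simple normality of $\omega$ forces $\frac1n\sum_{k\le n}\omega_k\to\tfrac12$.

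For the lower bounds I would construct explicit subsets of $\mathcal N\cap A_\alpha$ (resp.\ $\cap A$, $\cap B$) of full entropy by a concatenation/coding argument: partition $\N$ into long dyadic blocks $[2^{\ell},2^{\ell+1})$, and within each block make the even-index bits range over all admissible configurations with the right local pattern-$11$ frequency, then choose the odd-index bits block-by-block to force every word of length $n$ to equilibrate toward frequency $2^{-n}$ — this is the classical Champernowne/Besicovitch-type construction, and the extra chain constraints do not obstruct it because the constrained indices already have the right block-statistics by design. The resulting set has lower Bowen entropy at least $\tfrac12\log2+\tfrac12 H(2\alpha)$ (resp.\ $\tfrac12\log 2$), matching the upper bound.

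The main obstacle I expect is the \emph{upper} bound, specifically controlling the interaction between the two density-$1/2$ halves of $\N$ under the simultaneous demands of normality and the pattern-$11$ frequency. The even half wants entropy $\tfrac12\log2+\tfrac12 H(2\alpha)$ and the odd half wants $\tfrac12\log 2$, which would naively sum to more than $\tfrac12\log2+\tfrac12 H(2\alpha)$; the point is that for a single sequence the word-frequencies mix the two halves, and normality is a global constraint that cannot be met by taking the product of the two extremal structures. I would handle this by a careful counting of $(n,\epsilon)$-spanning sets: fix the even-index restriction (at most $e^{n(\frac12\log2+\frac12 H(2\alpha))+o(n)}$ choices up to $\epsilon$-closeness), then show that the number of odd-index completions that render the whole word (approximately) normal is \emph{subexponential}, or at least grows with a rate strictly smaller than $\tfrac12\log2$ by an amount that exactly cancels the surplus — this is where one needs a quantitative ergodic/large-deviations input rather than a soft argument, and it is the technical heart of the proof. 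In the special cases $\alpha=0$ (set $A$) and $\alpha=1/2$ (set $B$) the bookkeeping simplifies because the even-half structure is itself "balanced" (all-zero-compatible, resp.\ all-equal), so the surplus vanishes and one gets cleanly $h_{\rm top}=\tfrac12\log2$; I would present those two cases first as a warm-up and then treat general $\alpha\le\tfrac12$.
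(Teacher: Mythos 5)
Your decomposition of $\N$ into the dyadic chains $\{m,2m,4m,\dots\}$ ($m$ odd) is indeed the right geometric picture, and your containment ${\mathcal N}\cap B\subset A_{1/2}$ and the emptiness heuristic for $\alpha>1/2$ are fine. But both bounds have genuine gaps. For the upper bound you explicitly leave open the "technical heart", and the mechanism you gesture at is not the one that works: there is no need to show that the number of odd-index completions compatible with normality is subexponential or that it "cancels a surplus" -- in the actual count the odd positions do contribute their full $\tfrac12\log 2$. The correct mechanism is to push the normality constraint down the chains level by level: by Wall's theorem a normal sequence remains normal along every arithmetic progression, hence in each dyadic block the positions of the form $2^i(2k-1)$ carry $1$'s with frequency $\approx\tfrac12$ at \emph{every} level $i$. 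Writing $X_{11}^i,X_{00}^i$ for the numbers of pairs $(\omega_n,\omega_{2n})$ equal to $(1,1)$ and $(0,0)$ with $n$ at level $i-1$, this balance forces $X_{11}^i\approx X_{00}^i$, so the number of ways to fill level $i$ given level $i-1$ is $\binom{X_1^{i-1}}{X_{11}^i}\binom{X_0^{i-1}}{X_{00}^i}\approx\exp\bigl(L_i\,H(2X_{11}^i/L_i)\bigr)$ with $L_i$ the level size; the membership in $A_\alpha$ pins the weighted average of the ratios $2X_{11}^i/L_i$ at $2\alpha$, and concavity of $H$ (Jensen) then yields the covering bound $\tfrac12\log 2+\tfrac12 H(2\alpha)$, with the number of possible count-vectors being polynomial in the scale and hence negligible. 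Without this (or an equivalent quantitative input) your upper bound is an announcement, not a proof.

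The lower bound as you describe it also does not go through: you cannot let the even-index bits "range over all admissible configurations with the right pattern-$11$ frequency" and then repair normality using only the odd bits, because (again by Wall) the restriction of a normal sequence to the even positions must itself be normal, so most of the configurations you are counting admit no normal completion at all; the deviation-balancing you invoke is exactly what needs proof. A cleaner route (the one the paper takes) is probabilistic rather than Champernowne-type: define a measure under which the odd bits are i.i.d. fair and each $\omega_{2k}$ equals $\omega_k$ with probability $2\alpha$ (independently across chains). One checks by induction that every single bit is marginally fair, so any window $\omega_{m+1}\cdots\omega_{m+n}$ with $m\ge n$ meets each chain at most once and is exactly $(\tfrac12,\tfrac12)$-Bernoulli distributed; this gives normality of a.e.\ point, while the law of large numbers along the chains gives the frequency $\alpha$ of the pattern $11$ and the a.e.\ local entropy $\tfrac12\log2+\tfrac12H(2\alpha)$, after which the Ma--Wen (Billingsley-type) lemma for Bowen entropy yields the lower bound; the cases of $A$ and $B$ come from the same family at $\alpha=0$ and $\alpha=\tfrac12$. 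Your proposal contains neither this marginal-fairness observation nor a workable substitute, so the lower bound is also incomplete as written.
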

The last statement of Theorem \ref{thm:norm} was recently proved, in higher generality, by Aistleitner, Becher and Carton \cite{ABC}.

\smallskip
Let us now define the set of sequences with prescribed frequency of $0$'s and $1$'s:
\[
E_\theta:=\{ { \omega} \in \Sigma: \lim_{n\to\infty} {\omega_1({ \omega})+\cdots +\omega_n({\omega}) \over n}=\theta\}.
\]
In particular, $E_{1/2}$ is the set of simple normal sequences.
\begin{thm} \label{thm:freq}
We have
\[
 h_{\rm top}  (E_\theta \cap A_{\alpha})=\big(1-{\theta \over 2}\big)H\Big({2\theta- \alpha \over 2-\theta}\Big) +{\theta \over 2}H\Big({\theta- \alpha \over \theta}\Big)
\]
for $\alpha \leq \theta \leq (2+\alpha)/3$, otherwise $E_\theta \cap A_{\alpha}=\emptyset$.
Further,
\[  h_{\rm top}  ( E_\theta \cap A) = h_{\rm top}  ( E_\theta \cap A_0)={2-\theta\over 2} H\Big({2\theta \over 2-\theta}\Big).\]
\end{thm}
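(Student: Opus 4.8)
The proof rests on the decomposition $\N=\bigsqcup_{m\ \mathrm{odd}}\{m,2m,4m,\dots\}$ of the index set into dyadic chains, along which both defining conditions become conditions on statistics read level by level. For $k=2^{j}m$ with $m$ odd call $k/2=2^{j-1}m$ its \emph{predecessor} when $j\ge 1$, and say $k$ (i.e.\ $k$ odd) has no predecessor when $j=0$. The emptiness statement comes first and cheaply: from $\omega_{k}\omega_{2k}\le\omega_{k}$ and $\omega_{k}\omega_{2k}\ge\omega_{k}+\omega_{2k}-1$, summing over $k\le n$ and using $\sum_{k\le n}\omega_{2k}=\sum_{\mathrm{even}\ l\le 2n}\omega_{l}\ge\big(\sum_{l\le 2n}\omega_{l}\big)-n$, one gets for $\omega\in E_{\theta}$ that $\liminf_n\frac1n\sum_{k\le n}\omega_{k}\omega_{2k}\ge 3\theta-2$ and $\limsup_n\frac1n\sum_{k\le n}\omega_{k}\omega_{2k}\le\theta$; hence if also $\omega\in A_{\alpha}$ then $3\theta-2\le\alpha\le\theta$, so $E_{\theta}\cap A_{\alpha}=\emptyset$ unless $\alpha\le\theta\le(2+\alpha)/3$. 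For $\theta$ in this range put $a:=\alpha/\theta\in[0,1]$ and $b:=(2\theta-\alpha)/(2-\theta)\in[0,1]$, and observe, using $H(1-a)=H(a)$, that the asserted value equals $V:=\tfrac{2-\theta}{2}H(b)+\tfrac{\theta}{2}H(a)$, which is $\tfrac{2-\theta}{2}H\big(\tfrac{2\theta}{2-\theta}\big)$ when $\alpha=0$. The combinatorial core is an encoding: reading a word $w\in\{0,1\}^{N}$ at positions $1,\dots,N$ in order, each position is a chain start or has its predecessor already seen, so $w$ is reconstructible from the pair $(w^{(0)},w^{(1)})$, where $w^{(1)}$ lists in order the bits $w_{k}$ at positions whose predecessor bit is $1$ and $w^{(0)}$ the remaining bits. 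If $w$ is a length-$N$ prefix of some $\omega\in E_{\theta}\cap A_{\alpha}$ then, for large $N$, $|w^{(1)}|=\#\{l\le N/2:\omega_{l}=1\}\approx\theta N/2$ with $|w^{(1)}|_{1}=\#\{l\le N/2:\omega_{l}\omega_{2l}=1\}\approx\alpha N/2$ (this last count uses only $\omega_{1},\dots,\omega_{N}$), so $w^{(1)}$ has $1$-frequency $\approx a$, while $|w^{(0)}|\approx(1-\theta/2)N$ with $1$-frequency $\approx b$.

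\emph{Upper bound.} Since the conditions are asymptotic, write $E_{\theta}\cap A_{\alpha}=\bigcap_{\delta>0}\bigcup_{M_{0}\ge1}Z_{M_{0},\delta}$, where $Z_{M_{0},\delta}$ collects the $\omega$ whose $1$-frequency and bond-frequency at every scale $\ge M_{0}$ lie within $\delta$ of $(\theta,\alpha)$. As Bowen's entropy is monotone, countably stable ($h_{\rm top}(\bigcup_i Z_i)=\sup_i h_{\rm top}(Z_i)$), and bounded above by $\limsup_N\frac1N\log\#\{N\text{-cylinders meeting }Z\}$, it suffices to bound $h_{\rm top}(Z_{M_{0},\delta})$ uniformly in $M_{0}$. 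Counting length-$N$ prefixes of $Z_{M_{0},\delta}$ through the encoding above and the estimates on $(w^{(0)},w^{(1)})$ gives $\#\{N\text{-prefixes}\}\le\mathrm{poly}(N)\binom{\theta N/2}{\alpha N/2}\binom{(1-\theta/2)N}{(\theta-\alpha/2)N}e^{o(N)}$, with an $O(\delta)$ slack absorbed into the binomial parameters, whose exponential rate is $\tfrac{\theta}{2}H(a)+\tfrac{2-\theta}{2}H(b)+O(\delta)=V+O(\delta)$; letting $\delta\to0$ yields $h_{\rm top}(E_{\theta}\cap A_{\alpha})\le V$, and likewise for $\alpha=0$.

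\emph{Lower bound, and the statement on $A$.} Define a measure $\mu$ on $\Sigma$ by choosing $\omega_{1},\omega_{2},\dots$ in order, each $\omega_{k}$ Bernoulli with parameter $a$ if $k$ is even and $\omega_{k/2}=1$, and with parameter $b$ otherwise. Then $-\log\mu[\omega_{1}\cdots\omega_{N}]=\sum_{k\le N}\big(-\log\mathrm{Bern}_{c(k)}(\omega_{k})\big)$ with $c(k)\in\{a,b\}$ the parameter used at $k$; grouping the sum according to the value of $c(k)$ and applying a law of large numbers across the (independent) chains --- the dependence inside a single chain, of length $O(\log N)$, being harmless --- shows that $\mu$-a.e.\ $\omega$ lies in $E_{\theta}\cap A_{\alpha}$ and that $-\tfrac1N\log\mu[\omega_{1}\cdots\omega_{N}]\to V$ for \emph{every} $N$ (not only $N=2^{n}$), the point being that the proportion of positions $\le N$ whose predecessor bit is $1$ tends to $\theta/2$ uniformly in $N$. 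Billingsley's mass distribution principle for Bowen entropy then gives $h_{\rm top}(E_{\theta}\cap A_{\alpha})\ge V$. Finally, since $A\subset A_{0}$ the inequality $h_{\rm top}(E_{\theta}\cap A)\le h_{\rm top}(E_{\theta}\cap A_{0})$ is immediate; for the reverse take $\alpha=0$ above, so $a=0$: under $\mu$ a $1$ is then never followed in its chain by a $1$, i.e.\ $\mu(A)=1$, and the same computation gives $h_{\rm top}(E_{\theta}\cap A)\ge V=h_{\rm top}(E_{\theta}\cap A_{0})$.

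\emph{Where the difficulty lies.} The upper bound is bookkeeping once the encoding is set up. The real work is the lower bound: one must pick \emph{exactly} the right measure --- it is essential that the initial Bernoulli parameter and the ``predecessor-$0$'' parameter coincide (both $=b$), rather than using the stationary law of the underlying Markov chain, for otherwise $-\tfrac1N\log\mu[\omega_{1}\cdots\omega_{N}]$ oscillates with $N$ and Billingsley's $\liminf$ drops strictly below $V$ --- and then establish the almost sure convergences along \emph{all} $N$, which requires a law of large numbers for the weakly dependent triangular array of chain variables together with uniform control of the level populations up to $N$.
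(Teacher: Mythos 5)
Your proposal is correct in outline, and its lower bound is essentially the paper's own argument: your measure (parameter $a=\alpha/\theta$ at positions whose predecessor bit is $1$, parameter $b=(2\theta-\alpha)/(2-\theta)$ at all other positions, including chain starts) is exactly the measure $\mu_{p,q}$ of Section~\ref{sec:4} with $p=b$, $q=a$, and you conclude, as the paper does, from Theorem~\ref{lem:Bi} after checking $\mu_{p,q}(E_\theta\cap A_\alpha)=1$ and computing the a.e.\ local entropy; the laws of large numbers you only sketch (``dependence inside a chain is harmless'', convergence along all $N$) are carried out in the paper by the Section~\ref{sec:pre} machinery, namely the LLN applied on the half-blocks $(n/2,n]$, where the relevant variables are genuinely i.i.d., followed by the Peres--Solomyak interpolation Lemma~\ref{lem:PS}, which is precisely what upgrades convergence from $N=2^kn$ to all $N$. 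Where you genuinely diverge is the upper bound (and, packaged with it, the emptiness claim): the paper's Lemma~\ref{entropy-measure-all} evaluates the local entropy of the \emph{same} measure $\mu_{p,q}$ at \emph{every} point of $E_\theta\cap A_\alpha$ (not just a.e.) and invokes part (i) of Theorem~\ref{lem:Bi}, the inequalities it uses there ($N_{11}(\omega_n^{2n})\le N_1(\omega_{n/2}^n)$, etc.) being the same ones behind your emptiness computation; you instead cover $E_\theta\cap A_\alpha$ by sets $Z_{M_0,\delta}$, count $N$-cylinders via the injective predecessor encoding $w\mapsto(w^{(0)},w^{(1)})$, and appeal to countable stability of Bowen entropy and its comparison with the box-counting rate. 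Your route is sound (the encoding is injective, the binomial estimates give the rate $\tfrac{\theta}{2}H(a)+\tfrac{2-\theta}{2}H(b)$, and the cited properties of Bowen entropy are standard), and it is closer in spirit to the paper's counting proof of Theorem~\ref{thm:norm}; the paper's route for Theorem~\ref{thm:freq} buys the upper bound almost for free from the measure already built for the lower bound, with no Stirling bookkeeping and no need for stability or capacity bounds. One side remark of yours is inaccurate, though harmless to your proof: if the odd-position parameter were chosen differently from the predecessor-$0$ parameter, $-\tfrac1N\log\mu(C_N(\omega))$ would not oscillate --- by Proposition~\ref{prop:local-ent} it converges $\mu$-a.e.\ for any admissible data; the real role of the matching choice is that it makes the a.s.\ digit frequency $\xi$ equal $\theta$ (so the measure charges $E_\theta\cap A_\alpha$) and makes the limiting local entropy equal exactly the claimed value.
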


Note that
\[  h_{\rm top}  (E_{1/2} \cap A)={3\over 4} H\Big({2 \over 3}\Big)>h_{\rm top}  (\mathcal{N} \cap  A).\]

Applying the results of \cite{PeSo12}, we have the following corollary. 
\begin{cor}\label{cor}
The equality
\[
h_{\rm top}  (E_\theta \cap A_{\alpha})=h_{\rm top}  ( A_{\alpha})
\]
holds if and only if $\alpha, \theta$ satisfy the relation
\begin{align}\label{eq:max}
(2\theta-\alpha)^2(\theta-\alpha)(2-\theta)=\theta(2-3\theta+\alpha)^3.
\end{align}
In particular, when 
\[\theta={2\over 3}\left(1+\left({2 \over 23}\right)^{2/3} \sqrt[3]{3\sqrt{69}-23}- \left({2 \over 23}\right)^{2/3} \sqrt[3]{3\sqrt{69}+23}\right)=0.354...,\]
i.e., the unique real solution of the equation $4 \theta^2 (2 - \theta) =(2 - 3 \theta)^3$, 
we have 
\[\dim_H E_\theta \cap A= \dim_H A.
\]
\end{cor}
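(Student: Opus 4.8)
Both quantities in the equality are given by closed formulas — $h_{\rm top}(A_\alpha)$ by Theorem~\ref{thm:PS} and $h_{\rm top}(E_\theta\cap A_\alpha)$ by Theorem~\ref{thm:freq} — so the plan is to compare them directly. Write $f_\alpha(\theta):=h_{\rm top}(E_\theta\cap A_\alpha)$, which is an explicit function of $\theta$, smooth on the interior of $I_\alpha:=[\alpha,(2+\alpha)/3]$, with $E_\theta\cap A_\alpha=\emptyset$ for $\theta\notin I_\alpha$. Since $E_\theta\cap A_\alpha\subseteq A_\alpha$ and Bowen's topological entropy is monotone under inclusion, $f_\alpha(\theta)\le h_{\rm top}(A_\alpha)$ for every $\theta$. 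It therefore suffices to show: (i) $f_\alpha$ attains its maximum on $I_\alpha$ at a unique point $\theta_{\max}$, and $\theta=\theta_{\max}$ is equivalent to the relation \eqref{eq:max}; and (ii) $f_\alpha(\theta_{\max})=h_{\rm top}(A_\alpha)$. Given (i) and (ii), $h_{\rm top}(E_\theta\cap A_\alpha)=h_{\rm top}(A_\alpha)$ holds exactly when $\theta=\theta_{\max}$, i.e.\ exactly when \eqref{eq:max} holds.

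For (i): a direct computation of $f_\alpha''$ should show $f_\alpha$ is strictly concave on $I_\alpha$ (this is also forced by viewing $f_\alpha(\theta)$ as a supremum of measure-theoretic entropy over measures carried by $A_\alpha$ with frequency of $1$'s equal to $\theta$, a concave function of the linear constraint). Put $u=\frac{2\theta-\alpha}{2-\theta}$ and $v=\frac{\theta-\alpha}{\theta}$, so that $1-u=\frac{2-3\theta+\alpha}{2-\theta}$ and $1-v=\frac{\alpha}{\theta}$. Since $H'(t)=\log\frac{1-t}{t}$ blows up at $t=0$ and $t=1$, and since at the left endpoint of $I_\alpha$ one of $u,v$ tends to $0$ while at the right endpoint $u\to1$, one gets $f_\alpha'\to+\infty$ on the left and $f_\alpha'\to-\infty$ on the right; hence the maximum is attained at a unique interior critical point $\theta_{\max}$. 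Differentiating $f_\alpha$ and using $H(t)=tH'(t)-\log(1-t)$, the coefficients of the various logarithms should collapse (the combinations $-u+\frac{4-\alpha}{2-\theta}$ and $v+\frac{\alpha}{\theta}$ reducing to the constants $2$ and $1$), leaving
\[
2\log\frac{1-u}{u}+\log\frac{1-v}{v}+\log\frac{1-u}{1-v}=0 ,
\qquad\text{i.e.}\qquad
(1-u)^3=u^2v .
\]
Clearing denominators in $(1-u)^3=u^2v$ turns it into exactly \eqref{eq:max}, which is (i).

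For (ii): set $p:=u$ and $q:=1-v$ evaluated at $\theta_{\max}$. Two short algebraic checks are needed — that $2pq=\alpha(2+p-q)$ holds for $p=u,\ q=1-v$ \emph{identically in $\theta$}, and that the critical equation $(1-u)^3=u^2v$ is literally $p^2(1-q)=(1-p)^3$. By the uniqueness clause of Theorem~\ref{thm:PS}, $(p,q)$ is then the Peres--Solomyak pair for this $\alpha$; substituting into the formula of Theorem~\ref{thm:PS} and simplifying should yield $f_\alpha(\theta_{\max})=-\log(1-p)-\frac{\alpha}{2}\log\frac{q(1-p)}{p(1-q)}=h_{\rm top}(A_\alpha)$. (Alternatively, (ii) can be obtained softly: the entropy-maximising measure $\mu^*$ on $A_\alpha$ from \cite{PeSo12,FSW16} has, by a law of large numbers over the dyadic chains, a $\mu^*$-a.s.\ constant frequency of $1$'s $\theta^*$, so $\mu^*(E_{\theta^*}\cap A_\alpha)=1$, hence $f_\alpha(\theta^*)\ge h_{\rm top}(A_\alpha)$, forcing $\theta^*=\theta_{\max}$ and equality there.)

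Finally, the $\dim_H$ statement. On $\Sigma$ with its standard metric, Bowen's topological entropy and Hausdorff dimension are proportional, $h_{\rm top}=(\log2)\dim_H$; and by Theorems~\ref{thm:PS} and \ref{thm:freq}, $h_{\rm top}(A)=h_{\rm top}(A_0)$ and $h_{\rm top}(E_\theta\cap A)=h_{\rm top}(E_\theta\cap A_0)$. Hence $\dim_H(E_\theta\cap A)=\dim_H A$ iff \eqref{eq:max} holds with $\alpha=0$, i.e.\ (after cancelling the factor $\theta$; the root $\theta=0$ is spurious since $E_0\cap A_0$ has zero entropy while $h_{\rm top}(A_0)>0$) iff $4\theta^2(2-\theta)=(2-3\theta)^3$, equivalently $23\theta^3-46\theta^2+36\theta-8=0$. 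The derivative $69\theta^2-92\theta+36$ has negative discriminant, so this cubic is strictly increasing and has a single real root; Cardano's formula puts it in the displayed closed form, and since the cubic is $-8$ at $\theta=0$ and positive at $\theta=2/3$ the root lies in $(0,2/3)\subset I_0$, as required. I expect the main work to be the algebraic collapse in (i) and the identities behind (ii) (equivalently, if one takes the soft route for (ii), checking that $\mu^*$ has an almost sure frequency of $1$'s); the concavity, the endpoint behaviour, and the cubic analysis are routine.
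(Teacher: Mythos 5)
Your proposal is correct and follows essentially the same route as the paper: compare the closed formula of Theorem~\ref{thm:freq} with that of Theorem~\ref{thm:PS}, use monotonicity of entropy under $E_\theta\cap A_\alpha\subset A_\alpha$ together with concavity in $\theta$, and identify the critical-point equation $\frac{\partial}{\partial\theta}h_{\rm top}(E_\theta\cap A_\alpha)=\frac12\log\frac{\theta(2-3\theta+\alpha)^3}{(2\theta-\alpha)^2(\theta-\alpha)(2-\theta)}=0$ with \eqref{eq:max}, then take $\alpha=0$ for the last claim. You are in fact more explicit than the paper on step (ii) -- the paper simply invokes the results of \cite{PeSo12} for the fact that the maximal value equals $h_{\rm top}(A_\alpha)$, whereas you verify that $p=\frac{2\theta-\alpha}{2-\theta}$, $q=\frac{\alpha}{\theta}$ satisfy the Peres--Solomyak system at the critical point (both identities you flag do check out, and the substitution does collapse to their formula using $p^2(1-q)=(1-p)^3$) -- and you also correctly dispose of the spurious root $\theta=0$ when $\alpha=0$.
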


We organize our paper as follows. In Section \ref{sec:pre}, we give some preliminaries. Section \ref{sec:3} is devoted to the proof of Theorem \ref{thm:norm}. The proofs of Theorem \ref{thm:freq} and Corollary \ref{cor} are given in Section \ref{sec:4}.

\bigskip
\section{Preliminary}\label{sec:pre}

\subsection{Bowen's topological entropy}
In 1973, Bowen \cite{B73} introduced a definition of topological entropy for any subset which is not necessarily invariant or closed. 
Though the original definition of Bowen's topological entropy is for any topological dynamical systems, we recall, for simplicity, the definition of Bowen's entropy for a topological dynamical system $(X,T)$ equipped with a metric $d$.  For $x\in X$, $n\in \mathbb{N}$, $n\geq 1$, denote by $B_n(x, \epsilon)$ the Bowen ball defined by
\[
B_n(x,\epsilon):=\left\{y\in X: d(T^kx, T^ky)<\epsilon, \ \forall k=0, \dots, n-1\right\}.
\]
For $E\subset X$, $s\geq 0$, $N\geq 1$ and $\epsilon>0$, set 
$$
     \mathcal{H}^s_N (E, \epsilon) = \inf \sum_i \exp(-s n_i),
$$
where the infimum is taken over all finite or countable families $\{B_{n_i}(x_i, \epsilon)\}$ such that $x_i\in X, n_i\geq N$ and $E \subset \bigcup_i B_{n_i}(x_i,\epsilon)$.
  The quantity $ \mathcal{H}^s_N (E, \epsilon)$ is non-decreasing as $N$ increases, so the following limit exists
$$
    \mathcal{H}^s (E, \epsilon)  = \lim_{N \to \infty}  \mathcal{H}^s_N (E, \epsilon).
$$
For the quantity $ \mathcal{H}^s (E, \epsilon)$ considered as a function of
$s$, there exists a critical value, which we denote by $h_{\rm
top} (E, \epsilon)$, such that
$$
      \mathcal{H}^s (E, \epsilon) =\left\{ \begin{array} {ll} +\infty, & s <
                        h_{\rm top} (E, \epsilon) \\ 0 , & s> h_{\rm
                        top} (E, \epsilon).  \end{array} \right.
$$
One can prove that the following limit exists
$$
          h_{\rm top} (E) = \lim_{\epsilon \to 0} h_{\rm top} (E,
          \epsilon).
$$
The quantity $h_{\rm top} (E)$ is called the {\em topological
entropy} of $E$.

We remark that for the symbolic dynamical system $(\Sigma, \sigma)$ where the space $\Sigma=\{0,1\}^{\mathbb{N}}$ is equipped with the usual metric defined by 
\[
\forall \omega, \tau \in \Sigma, \quad d(\omega, \tau):=2^{-\min\{n\geq 0: \ \omega_{n+1} \neq \tau_{n+1}\}},
\]
and $\sigma$ is the left shift defined by 
\[
\sigma : \omega_1\omega_2 \dots \mapsto \omega_2\omega_3 \dots,
\]
the Bowen ball $B_n(\omega, \epsilon)$ ($\epsilon<1$) is nothing but the cylinder $C_n(\omega_1, \cdots, \omega_n)$ of order $n$ defined by
\[
C_n(\omega_1, \cdots, \omega_n):=\{\tau\in \Sigma : \tau_1=\omega_1, \cdots, \tau_n=\omega_n\},
\] 
and Bowen's topological entropy $h_{\rm top} (E) $ and Hausdorff dimension $\dim_H(E)$ of a subset $E\subset X$ are different only with a constant:
\[
h_{\rm top} (E) = (\log 2)\cdot \dim_H(E).
\] 
We refer to Falconer's book \cite{F90} for the details on Hausdorff dimension.

\subsection{Billingsley's lemma for Bowen's entropy}
The Mass Distribution Principle (\cite[Principle 4.2]{F90}), or more generally, Billingsley's lemma (\cite{Bi61}) for the Hausdorff dimension has the following topological entropy version (\cite{MW08}).

Let $\mu$ be a Borel probability measure on $X$. The local entropy $h_{\mu}(x)$ at a point $x\in X$ is defined as 
\[
h_{\mu}(x)=\lim_{\epsilon \to 0}\liminf_{n\to\infty} {- \log \mu (B_n(x,\epsilon)) \over n}.
\]

\begin{thm}[Ma-Wen 2008]\label{lem:Bi}
Let $\mu$ be a Borel probability measure on $X$, $E\subset X$ be a Borel subset and $0<h<\infty$. Then
\begin{enumerate}
\item[i)] if $h_\mu(x) \leq h$ for all $x\in E$, then $h_{\rm top} (E) \leq h$,
\item[ii)] if $h_\mu(x) \geq h$ for all $x\in E$, and $\mu(E)>0$, then $h_{\rm top} (E) \geq h$.
\end{enumerate}
\end{thm}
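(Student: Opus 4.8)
\textbf{Proof proposal for Theorem \ref{lem:Bi}.}

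The statement is a two-sided Billingsley-type principle, so I would prove the two inequalities separately, each mirroring the classical Hausdorff-dimension arguments with Bowen balls (in our case cylinders) playing the role of dyadic cubes. Throughout I would first record two monotonicity facts that are immediate from the definitions. Because shrinking $\epsilon$ shrinks every Bowen ball, every cover by $\epsilon'$-balls is also a cover by $\epsilon$-balls of the same weight when $\epsilon'<\epsilon$; hence $\mathcal H^s_N(E,\epsilon)$ is non-decreasing as $\epsilon\downarrow 0$, so $h_{\rm top}(E,\epsilon)$ increases and $h_{\rm top}(E)=\sup_{\epsilon>0}h_{\rm top}(E,\epsilon)$. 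Likewise, since $h_\mu(x)=\sup_{\epsilon>0}\liminf_{n}\frac{-\log\mu(B_n(x,\epsilon))}{n}$, each hypothesis is in fact a statement valid at every scale $\epsilon$.

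For part (ii) I would run the mass distribution principle. Fix $t<h$. For $x\in E$ the inequality $h_\mu(x)\ge h>t$ means that at all sufficiently small scales $\delta$ one has $\liminf_n\frac{-\log\mu(B_n(x,\delta))}{n}>t$; decomposing $E$ according to how small a scale is needed and to the threshold $m$ beyond which $\mu(B_n(x,2\epsilon_0))\le e^{-tn}$ for all $n\ge m$, and using $\mu(E)>0$, a pigeonhole argument (continuity of measure along the increasing decomposition) produces a fixed scale $\epsilon_0$, a fixed $m$, and a subset $F\subset E$ with $\mu(F)=:c>0$ such that $\mu(B_n(y,2\epsilon_0))\le e^{-tn}$ for every $y\in F$ and $n\ge m$. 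Now take any cover of $F$ by Bowen balls $B_{n_i}(x_i,\epsilon_0)$ with $n_i\ge m$; discarding those disjoint from $F$ and replacing each remaining center $x_i$ by a point $y_i\in F\cap B_{n_i}(x_i,\epsilon_0)$, the triangle inequality gives $B_{n_i}(x_i,\epsilon_0)\subset B_{n_i}(y_i,2\epsilon_0)$, whence $c=\mu(F)\le\sum_i\mu(B_{n_i}(y_i,2\epsilon_0))\le\sum_i e^{-tn_i}$. Thus $\mathcal H^t(F,\epsilon_0)\ge c>0$, so $h_{\rm top}(E)\ge h_{\rm top}(F,\epsilon_0)\ge t$; letting $t\uparrow h$ finishes (ii).

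For part (i) I would fix $\epsilon>0$ and $s>h$ and bound $h_{\rm top}(E,\epsilon)$. For each $x\in E$ and each $N$, the hypothesis $\liminf_n\frac{-\log\mu(B_n(x,\epsilon))}{n}\le h<s$ supplies an integer $n(x)\ge N$ with $\mu(B_{n(x)}(x,\epsilon))\ge e^{-s\,n(x)}$, and the balls $\{B_{n(x)}(x,\epsilon)\}_{x\in E}$ cover $E$. The whole difficulty is to pass from this cover to a subfamily whose weights $\sum_i e^{-sn_i}$ are controlled by total mass: if I can extract a subfamily of uniformly bounded multiplicity $M$, then $\sum_i e^{-sn_i}\le\sum_i\mu(B_{n_i}(x_i,\epsilon))\le M$, giving $\mathcal H^s_N(E,\epsilon)\le M$ for every $N$, hence $\mathcal H^s(E,\epsilon)<\infty$ and $h_{\rm top}(E,\epsilon)\le s$; as $\epsilon$ and $s>h$ are arbitrary this yields $h_{\rm top}(E)\le h$.

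The main obstacle is precisely this selection step, since Bowen balls are not metric balls and the usual Vitali/Besicovitch covering theorems do not apply verbatim. In the symbolic setting that we actually need -- where $X=\{0,1\}^\N$ and, by the computation in Section \ref{sec:pre}, every Bowen ball $B_n(x,\epsilon)$ with $\epsilon<1$ is the cylinder $C_n(x_1,\dots,x_n)$ -- the obstacle disappears: two cylinders are either nested or disjoint, and along any increasing chain the lengths strictly decrease, so every selected cylinder sits inside a maximal one of length $\ge N$. The maximal cylinders form a pairwise disjoint subcover of $E$, so the multiplicity is $M=1$ and $\sum_i e^{-sn_i}\le\sum_i\mu(C_{n_i})\le\mu(X)=1$ outright. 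In a general metric system one instead recovers a bounded-multiplicity subfamily by a Vitali-type argument tailored to Bowen balls; carrying this out carefully is the only genuinely technical point, and it is where I expect the real work to lie.
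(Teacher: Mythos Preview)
The paper does not actually prove Theorem~\ref{lem:Bi}: it is quoted from \cite{MW08} and used as a black box, so there is no ``paper's own proof'' to compare against. Your proposal is therefore not competing with anything in the text.

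On its own merits, your argument is the standard Billingsley-type proof and is sound in the symbolic setting that the paper requires. Part~(ii) is correct as written. In part~(i) you correctly identify the only real issue --- extracting a bounded-multiplicity subcover from the family $\{B_{n(x)}(x,\epsilon)\}$ --- and you dispose of it cleanly for $X=\{0,1\}^{\N}$ using the nested-or-disjoint property of cylinders. For the general metric statement, however, you stop short: you say a ``Vitali-type argument tailored to Bowen balls'' does the job but do not carry it out, and this is not a triviality (Bowen balls of different orders need not satisfy the engulfing property that drives Vitali). If you intend to prove the theorem at the level of generality stated, that step has to be supplied; if you only need the symbolic case used later in the paper, it would be cleaner to state and prove that version directly.
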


We remark that in the symbolic dynamical system $(\Sigma, \sigma)$, the local entropy $h_\mu(\omega)$ at a point $\omega\in \Sigma$ is 
\[
h_\mu(\omega)=\liminf_{n\to\infty} {-\log \mu(C_n(\omega)) \over n},
\]
where $C_n(\omega)$ is the cylinder of order $n$ containing the point $\omega$.

\subsection{A lemma of elementary analysis}

The following lemma of Peres and Solomyak (\cite[Lemma 5]{PeSo12}) will be applied several times.
\begin{lem}[Peres and Solomyak] \label{lem:PS}
Suppose that $\{z_n\}$ is a bounded real sequence and there exists $c > 0$ such that
$|z_n-z_{n+m}|\leq c{{m \over n}}$  for all $m,n \in \mathbb{N}$.
If $z_{2^kn}\rightarrow \gamma$ as $k\to \infty$ for all $n\in \mathbb{N}$, then $z_n\to \gamma$.
\end{lem}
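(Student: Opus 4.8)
The plan is to prove the convergence $z_n\to\gamma$ directly from the definition: given $\epsilon>0$, I will exhibit a threshold $N_0$ beyond which $|z_N-\gamma|<\epsilon$. The entire argument rests on one decomposition of a large index $N$ that simultaneously exploits the Lipschitz-type control $|z_n-z_{n+m}|\le cm/n$ and the dyadic convergence $z_{2^kn}\to\gamma$. Notably, I expect boundedness of $\{z_n\}$ to play no real role.

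First I would fix a window of indices $\{L,L+1,\dots,2L-1\}$, where $L$ is a large integer to be chosen. The point of taking the ratio of the endpoints equal to $2$ is that the union of the dyadic rays $\{2^k n:k\ge 0\}$ over $n$ in this window covers every integer $\ge L$: any $N\ge L$ lies in a unique block $[2^kL,2^{k+1}L)$, and setting $n=\lfloor N/2^k\rfloor$ gives $n\in[L,2L)$ together with a remainder $r=N-2^kn\in[0,2^k)$. Applying the hypothesis with base index $2^kn$ and increment $r$ then yields
\[
|z_N-z_{2^kn}|\le c\,\frac{r}{2^kn}\le c\,\frac{2^k}{2^kL}=\frac{c}{L},
\]
so every large $N$ is within $c/L$ of some value $z_{2^kn}$ whose first factor $n$ lies in the fixed finite window.

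The second ingredient is that, because the window $[L,2L)$ contains only finitely many integers, the $L$ dyadic convergences $z_{2^kn}\to\gamma$ are automatically uniform in $n$: there is a $K$ such that $|z_{2^kn}-\gamma|<\epsilon/2$ for all $k\ge K$ and all $n\in[L,2L)$. Now I would choose $L>2c/\epsilon$ first (so that $c/L<\epsilon/2$), then $K$ as above, and set $N_0=2^KL$. For $N\ge N_0$ the block index $k$ produced by the decomposition satisfies $k\ge K$ (since $2^KL\le N<2^{k+1}L$ forces $K\le k$), whence
\[
|z_N-\gamma|\le|z_N-z_{2^kn}|+|z_{2^kn}-\gamma|<\frac{c}{L}+\frac{\epsilon}{2}<\epsilon,
\]
which is exactly the desired estimate.

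I do not anticipate a serious obstacle; the only delicate point is the balancing of the two parameters. The window must be taken large enough that the approximation error $c/L$ is small, yet it must remain finite so that the finitely many dyadic limits are uniform. These two requirements are compatible precisely because shrinking the relative gap (choosing $L$ large) and keeping the window finite are independent constraints, so $L$ can be fixed once and for all before $K$ is selected. Finally, I would remark that the hypothesis that $\{z_n\}$ is bounded is not actually used: the finiteness of $\gamma$ is already guaranteed by the assumed convergence of the dyadic subsequences, and uniform convergence over a finite set of convergent sequences needs no boundedness.
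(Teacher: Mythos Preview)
Your argument is correct. The paper itself does not prove this lemma: it is quoted verbatim from Peres and Solomyak \cite{PeSo12} (their Lemma~5) and stated without proof, so there is no in-paper argument to compare against. Your direct $\epsilon$--$N$ proof via the dyadic window $\{L,\dots,2L-1\}$ is a clean self-contained version, and your remark that the boundedness hypothesis is superfluous is also correct.
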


\subsection{A family of measures on $\{0,1\}^\mathbb{N}$}\label{subsec:measures}

For the lower bound estimations of the topological entropy, the following family of measures on $\{0,1\}^\mathbb{N}$ will be used.
Let $(p_0, p_1)$ be a probability vector, i.e., $p_0, p_1\geq0$ and $p_0+p_1=1$. Let 
\[
\begin{pmatrix}
   p_{00} & p_{01} \\
   p_{10} & p_{11} 
\end{pmatrix}
\]
be a transition matrix with all coefficients $p_{ij}\geq0$ and $p_{i0}+p_{i1}=1$ for $i=0,1$.
We also assume the following condition which will guarantee our measures to be non-trivial:
\begin{align}\label{cond:nontrivial}
|p_{01}-p_{11}| \neq 2. 
\end{align}

With the data $p_i, p_{ij}$ ($i,j\in \{0,1\}$), we define a Borel measure $\mu$ on the space $\Sigma= \{0,1\}^\mathbb{N}$ as follows
\begin{itemize}
\item[--] if $k$ is odd then $\omega_k=1$ with probability $p_1$,
\item[--] if $k$ is even and $\omega_{k/2}=1$ then $\omega_k=1$ with probability $p_{11}$,
\item[--] if $k$ is even and $\omega_{k/2}=0$ then $\omega_k=1$ with probability $p_{01}$,
\end{itemize}
with the events $\{\omega_k=1\}$ and $\{\omega_\ell =1\}$ independent except when $k/\ell$ is a power of 2. 
More precisely, 
we define the measure $\mu$ on any cylinder $C_n(\omega_1\cdots\omega_{n})$ of order $n\in\mathbb{N}$ by
\[
\mu(C_n(\omega_1\cdots\omega_{n}))=\prod_{k=1}^{\lceil n/2\rceil}p_{\omega_{2k-1}} \cdot \prod_{k=1}^{\lfloor n/2\rfloor} p_{\omega_{k}\omega_{2k}}.
\]
where $\lceil \cdot \rceil, {\lfloor \cdot\rfloor}$ denote the ceiling function and the integer part function. 
Then by Kolmogorov consistence theorem, $\mu $ is well defined on $\Sigma$. We remark that the measure $\mu$ depends on the given data $p_i, p_{ij}$ ($i,j\in \{0,1\}$). We will see late that by suitablely choosing these data, we can find suitable measures supported on the sets, using which we calculate the topological entropy. 

\smallskip
For $\omega\in \Sigma$ and $n\in \mathbb{N}$, set
\[
x_n(\omega) = \frac 2 n \sum_{k=n/2+1}^n \omega_k.
\]
The following Lemmas \ref{lem:LLN}, \ref{lem:wk-w2k} and \ref{lem:h-n} implied by the strong Law of Large Numbers will be useful.
\begin{lem}\label{lem:LLN}
For $\mu$-almost all  $\omega$ and for big enough $n$, 
\[
x_{2n}(\omega) = {\frac 1 2}p_1  + \frac {x_n(\omega)} 2 p_{10} + \frac {1-x_n(\omega)} 2 p_{01} + o(1).
\]
\end{lem}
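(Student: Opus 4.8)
The plan is to regard $x_{2n}(\omega)=\tfrac1n\sum_{m=n+1}^{2n}\omega_m$ as a windowed average over the block of indices $m\in(n,2n]$, to split each coordinate $\omega_m$ into its conditional expectation given the earlier coordinates plus a bounded martingale increment, to check that the martingale part is $o(1)$ for $\mu$-almost every $\omega$, and to identify the conditional-expectation part with the right-hand side. Throughout write $\mathcal{F}_m:=\sigma(\omega_1,\dots,\omega_m)$.

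First I would read the one-step conditional laws off the defining product formula for $\mu$. For $m\ge 2$ the ratio $\mu(C_m(\omega_1\cdots\omega_m))/\mu(C_{m-1}(\omega_1\cdots\omega_{m-1}))$ telescopes to a single surviving factor, equal to $p_{\omega_m}$ when $m$ is odd and to $p_{\omega_{m/2}\,\omega_m}$ when $m$ is even; this factor is the $\mathcal{F}_{m-1}$-conditional probability that the $m$-th coordinate takes the value $\omega_m$. Since $m/2\le m-1$ for even $m$, it follows that $\mu$-almost surely
\[
\mathbb{E}[\omega_m\mid\mathcal{F}_{m-1}]=
\begin{cases}
p_1, & m \text{ odd},\\[4pt]
p_{11}\,\omega_{m/2}+p_{01}\,(1-\omega_{m/2}), & m \text{ even}.
\end{cases}
\]

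Next I would sum these conditional means over $m\in(n,2n]$. The odd indices number $\tfrac n2+O(1)$ and contribute $\tfrac n2 p_1+O(1)$; the even indices are $m=2j$ with $j$ running through $(n/2,n]$, which for even $n$ is exactly the window defining $x_n$, so $\sum_{j\in(n/2,n]}\omega_j=\tfrac n2 x_n(\omega)$ while $\#\{j\in(n/2,n]\}=\tfrac n2$, and these indices contribute $\tfrac n2\bigl(p_{11}x_n(\omega)+p_{01}(1-x_n(\omega))\bigr)+O(1)$ (the $O(1)$ terms also absorbing ceiling and floor corrections for odd $n$). Dividing by $n$ gives $\tfrac1n\sum_{m=n+1}^{2n}\mathbb{E}[\omega_m\mid\mathcal{F}_{m-1}]=\tfrac12 p_1+\tfrac12 p_{11}x_n(\omega)+\tfrac12 p_{01}(1-x_n(\omega))+O(1/n)$. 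To conclude, note that $M_N:=\sum_{m=1}^N\bigl(\omega_m-\mathbb{E}[\omega_m\mid\mathcal{F}_{m-1}]\bigr)$ is an $(\mathcal{F}_N)$-martingale with increments in $[-1,1]$, so the strong law of large numbers for such martingales (equivalently, Azuma's inequality together with Borel--Cantelli) gives $M_N/N\to 0$ for $\mu$-a.e.\ $\omega$; consequently
\[
\frac1n\sum_{m=n+1}^{2n}\bigl(\omega_m-\mathbb{E}[\omega_m\mid\mathcal{F}_{m-1}]\bigr)=2\cdot\frac{M_{2n}}{2n}-\frac{M_n}{n}\longrightarrow 0\qquad(\mu\text{-a.e.}),
\]
and adding the two contributions yields $x_{2n}(\omega)=\tfrac12 p_1+\tfrac12 p_{11}x_n(\omega)+\tfrac12 p_{01}(1-x_n(\omega))+o(1)$ for $\mu$-almost every $\omega$. (A martingale-free variant: conditionally on $\mathcal{F}_n$ the variables $\{\omega_{2j}:j\in(n/2,n]\}$ are independent by the product structure of $\mu$, so a conditional Hoeffding bound together with Borel--Cantelli disposes of the even block, while the odd block is covered by the ordinary strong law.)

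The step I expect to require the most care is the first one: checking from the cylinder formula that conditioning on the whole past $\omega_1,\dots,\omega_{m-1}$ collapses, for even $m$, to the single dyadic transition from the parent $\omega_{m/2}$ --- in particular that coordinates sharing that parent, or lying on overlapping dyadic chains, contribute nothing. This is precisely what the telescoping of the product encodes; once it is in place, the remainder is the standard martingale law of large numbers together with elementary bookkeeping on block lengths, and I foresee no further obstacle.
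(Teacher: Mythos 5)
Your proof is correct, and it takes a somewhat different route from the paper's. The paper argues via marginals and in-window independence: since no two indices in a dyadic window $(n/2,n]$ or $(n,2n]$ have ratio a power of $2$, the coordinates there are mutually independent under $\mu$, so the classical strong law gives $x_n(\omega)=\frac2n\sum_k\mu(\omega_k=1)+o(1)$ along the window (this is the paper's \eqref{eq:x-n}, reused later in Lemma \ref{lem:wk-w2k}); it then computes $\mu(\omega_k=1)$ for $k\in(n,2n]$ by conditioning each even index on its parent $\omega_{k/2}$, which yields the recursion. You instead condition each $\omega_m$ on the full past $\mathcal{F}_{m-1}$, verify from the telescoping of the cylinder formula that this collapses to the single parent transition ($p_1$ for odd $m$, $p_{11}\omega_{m/2}+p_{01}(1-\omega_{m/2})$ for even $m$), and control the fluctuations by the martingale strong law (Azuma plus Borel--Cantelli), using $\frac1n(M_{2n}-M_n)\to0$. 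This buys two things: the recursion comes out pathwise, directly in terms of the empirical $x_n(\omega)$ rather than through the deterministic marginal averages, and you sidestep the paper's loose claim that the $\omega_k$ in a window are ``identically distributed'' (they are not, since $\mu(\omega_k=1)$ depends on the dyadic structure of $k$; the paper really needs the SLLN for independent, non-identically distributed bounded variables or a Hoeffding--Borel--Cantelli argument, which your martingale step supplies in a clean form). The paper's version is slightly shorter because \eqref{eq:x-n} is recycled elsewhere; your parenthetical ``martingale-free variant'' (conditional Hoeffding on the even block, ordinary SLLN on the odd block) is essentially the rigorous form of the paper's argument. One small point: you obtain the coefficient $p_{11}$ multiplying $x_n(\omega)/2$, whereas the lemma as displayed has $p_{10}$; this is a typo in the statement, since the paper's own proof and the subsequent Corollary \ref{cor:lim-x-n} (where $\xi=(p_1+p_{01})/(2-p_{11}+p_{01})$) both use $p_{11}$, so your version is the intended one.
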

\begin{proof}
Recall that, by the definition of the measure $\mu$, the events $\{\omega_k=1\}$ and $\{\omega_\ell =1\}$ are independent except when $k/\ell$ is a power of $2$.
 While in the average of $x_n(\omega)$ there is no different $k$'s with quotient bing a power of $2$. Thus for those $n/2+1 \leq k \leq n$, $\omega_k$ are independent. They are also identically distributed. Further, it is evident that the expectation and variance of these $\omega_k$ are finite. Thus by the classical strong Law of Large Numbers, 
\begin{align}\label{eq:x-n}
x_n(\omega)=\mathbb{E}_\mu(\omega_k)+o(1)=\mu (\omega_k=1)+o(1), \quad \mu-a.e.
\end{align}
where $n/2+1 \leq k \leq n$.  

Note that for $n+1\leq k \leq 2n$, we have
\begin{align*}
&\mu (\omega_k=1)\\
=&\mathbb{P}(\{k \text{ is odd} \}) \cdot \mu( \omega_k=1| \{k \text{ is odd} \}) +\mathbb{P}(\{k \text{ is even} \}) \cdot  \mu( \omega_k=1| \{k \text{ is even} \})\\
=&{1 \over 2} p_1 + {1 \over 2} \mu (\omega_m=1) \cdot \mu(\omega_k=1|k=2m,  \omega_m=1)\\
&+ {1 \over 2}  \mu (\omega_m=0) \cdot \mu(\omega_k=1|k=2m, \omega_m=0)\\
=&{1 \over 2} p_1 + {1 \over 2} \mu (\omega_m=1) \cdot p_{11}+ {1 \over 2}  \mu (\omega_m=0) \cdot p_{01},
\end{align*}
where $m=k/2$, so $n/2+1\leq m\leq n$.

Therefore, for $\mu$-almost all  $\omega$ and for big enough $n$, 
\[
x_{2n}(\omega) = {\frac 1 2}p_1  + \frac {x_n(\omega)} 2 p_{11} + \frac {1-x_n(\omega)} 2 p_{01} + o(1).
\]
\end{proof}

Applying Lemma \ref{lem:PS}, we can determine the $\mu$-almost sure limit of $x_n(\omega)$.
\begin{cor}\label{cor:lim-x-n}
For $\mu$-almost all sequence $\omega$,
\[
\lim_{n\to\infty} x_n(\omega) =\xi:= {p_1+p_{01} \over 2-p_{11}+p_{01}}.
\]
\end{cor}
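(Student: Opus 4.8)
The plan is to apply Lemma \ref{lem:PS} to the bounded sequence $z_n := x_n(\omega)$ for a $\mu$-typical $\omega$. There are two hypotheses to verify: the quasi-stability bound $|z_n - z_{n+m}| \le c\,m/n$ for all $m,n$, and convergence of $z_{2^k n}$ to a common limit $\gamma$ as $k\to\infty$ for every fixed $n$.

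\textbf{Step 1: the quasi-stability bound.} Since $z_n = \frac{2}{n}\sum_{k=n/2+1}^{n}\omega_k$ is an average of at most $n/2$ terms each lying in $\{0,1\}$, comparing $z_n$ and $z_{n+m}$ amounts to comparing two such windowed averages whose index ranges differ by $O(m)$ positions (both the lower and upper endpoints move by at most $m$). Each differing position contributes at most $2/n$ (up to a bounded factor, since $n+m \ge n$), so $|z_n - z_{n+m}| \le c\, m/n$ for a universal constant $c$; this is a purely deterministic estimate requiring no use of the measure.

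\textbf{Step 2: convergence along powers of $2$.} Fix $n_0 \in \mathbb{N}$ and consider the subsequence $n = 2^k n_0$. Lemma \ref{lem:LLN} gives, for $\mu$-a.e.\ $\omega$ and all large $n$,
\[
x_{2n}(\omega) = \tfrac{1}{2}p_1 + \tfrac{1}{2}p_{11}\, x_n(\omega) + \tfrac{1}{2}p_{01}\,(1-x_n(\omega)) + o(1),
\]
i.e.\ writing $f(t) = \tfrac12 p_1 + \tfrac12 p_{01} + \tfrac12(p_{11}-p_{01})t$, we have $x_{2n} = f(x_n) + o(1)$ along the doubling sequence. The map $f$ is affine with slope $\tfrac12(p_{11}-p_{01})$, and condition \eqref{cond:nontrivial}, $|p_{01}-p_{11}|\ne 2$, forces $|f'| \ne 1$; combined with $p_{ij}\in[0,1]$ we in fact get $|f'| = \tfrac12|p_{11}-p_{01}| < 1$, so $f$ is a contraction with unique fixed point
\[
\xi = \frac{\tfrac12 p_1 + \tfrac12 p_{01}}{1 - \tfrac12(p_{11}-p_{01})} = \frac{p_1 + p_{01}}{2 - p_{11} + p_{01}}.
\]
A standard argument for perturbed contractions (the recursion $a_{k+1} = f(a_k) + \varepsilon_k$ with $\varepsilon_k\to 0$ and $|f'|<1$ converges to the fixed point) then shows $x_{2^k n_0}(\omega) \to \xi$ as $k\to\infty$, and the limit $\xi$ does not depend on $n_0$.

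\textbf{Step 3: conclusion.} With both hypotheses of Lemma \ref{lem:PS} verified (for $\mu$-a.e.\ $\omega$), we conclude $x_n(\omega) \to \xi$ for $\mu$-a.e.\ $\omega$. The main subtlety is Step 2: one must be careful that Lemma \ref{lem:LLN} is an almost-sure statement valid only for $n$ large (so the recursion is only \emph{eventually} a perturbed contraction), and that the exceptional $\mu$-null set does not depend on the starting point $n_0$ — this is fine because there are only countably many choices of $n_0$, so one takes the union of the corresponding null sets. Everything else is routine.
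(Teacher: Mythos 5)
Your proposal is correct and follows essentially the same route as the paper: convergence of $x_{2^k n}$ via the recursion of Lemma \ref{lem:LLN} (a perturbed affine contraction with fixed point $\xi$, using condition \eqref{cond:nontrivial}), followed by Lemma \ref{lem:PS}. You merely spell out details the paper leaves implicit, namely the deterministic bound $|x_n - x_{n+m}|\le c\,m/n$, the perturbed-contraction argument, and the countable union of null sets over starting points $n_0$.
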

\begin{proof}
Note that by the condition (\ref{cond:nontrivial}), $$\left|{p_{01}-p_{11} \over 2}\right|<1.$$
Thus by Lemma \ref{lem:LLN}, $\mu$-almost surely, as $k\to \infty$,
\[
x_{2^k n}(\omega) \to {p_1+p_{01} \over 2-p_{11}+p_{01}}.
\]
By Lemma \ref{lem:PS}, this implies that $\mu$-almost surely
\begin{equation} \label{eq:freq}
\lim_{n\to\infty} x_n(\omega) =  {p_1+p_{01} \over 2-p_{11}+p_{01}}.
\end{equation}
\end{proof}

\begin{prop}\label{prop:lim-wk}
For $\mu$-almost all sequence $\omega$,
\[
\lim_{n\to\infty} {1 \over n} \sum_{j=1}^n\omega_j =\xi= {p_1+p_{01} \over 2-p_{11}+p_{01}}.
\]
\end{prop}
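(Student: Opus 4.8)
The plan is to bootstrap from the dyadic block averages $x_n(\omega)$, whose almost sure limit is already identified in Corollary \ref{cor:lim-x-n}, first to convergence of the Ces\`aro averages along the dyadic subsequences $\{2^k n\}_k$, and then to full convergence via Lemma \ref{lem:PS}. Write $S_n=S_n(\omega)=\sum_{j=1}^n\omega_j$ and $z_n=z_n(\omega)=S_n/n$; the goal is $z_n\to\xi$ for $\mu$-almost every $\omega$.

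The first step is to record the exact recursion linking $z_n$, $z_{2n}$ and $x_{2n}$. Since
\[
x_{2n}(\omega)=\frac{2}{2n}\sum_{k=n+1}^{2n}\omega_k=\frac{S_{2n}-S_n}{n},
\]
dividing the identity $S_{2n}=S_n+n\,x_{2n}(\omega)$ by $2n$ gives
\[
z_{2n}=\tfrac12 z_n+\tfrac12 x_{2n}(\omega),
\]
with no parity loss. By Corollary \ref{cor:lim-x-n}, for $\mu$-almost every $\omega$ one has $x_m(\omega)\to\xi$ as $m\to\infty$; fix such an $\omega$ and an arbitrary starting index $n$, and set $a_k=z_{2^k n}-\xi$, $b_k=x_{2^k n}(\omega)-\xi$, so that $a_k=\tfrac12 a_{k-1}+\tfrac12 b_k$ with $(a_k)$ bounded (it lies in $[-1,1]$) and $b_k\to 0$. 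A one-line contraction estimate, iterating $|a_k|\le\tfrac12|a_{k-1}|+\tfrac12|b_k|$ and summing the geometric series $\sum_j 2^{-j}=2$, yields $a_k\to 0$; hence $z_{2^k n}\to\xi$ as $k\to\infty$, for every $n$.

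The final step is to promote convergence along the dyadic subsequences to convergence of the whole sequence, and here Lemma \ref{lem:PS} applies verbatim once its hypotheses are checked: $\{z_n\}$ is bounded, and for all $m,n$ one has the elementary estimate $|z_n-z_{n+m}|\le 2\,\frac{m}{n}$, obtained by writing $S_{n+m}=S_n+R$ with $0\le R\le m$ and bounding $|z_n-z_{n+m}|\le\frac{S_n\,m}{n(n+m)}+\frac{R}{n+m}\le\frac{2m}{n+m}$. Applying Lemma \ref{lem:PS} with $c=2$ then gives $z_n\to\xi$, i.e.\ $\frac1n\sum_{j=1}^n\omega_j\to\xi$ $\mu$-a.e., which is the assertion. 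I do not anticipate a genuine obstacle: the argument is a short bootstrap, and the only points demanding a little care are writing the recursion in a form that plugs cleanly into Corollary \ref{cor:lim-x-n} (avoiding floor/parity corrections) and verifying the Lipschitz-type hypothesis of Lemma \ref{lem:PS}.
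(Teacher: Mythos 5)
Your proposal is correct and follows essentially the same route as the paper: both use Corollary \ref{cor:lim-x-n} to obtain $\frac{1}{2^kn}\sum_{j\le 2^kn}\omega_j\to\xi$ along dyadic subsequences (you via the recursion $z_{2n}=\tfrac12 z_n+\tfrac12 x_{2n}$, the paper via the equivalent telescoping decomposition into blocks $2^{i-1}n\cdot x_{2^in}$), and then upgrade to full convergence with Lemma \ref{lem:PS}. Your explicit verification of the hypothesis $|z_n-z_{n+m}|\le 2m/n$ is a routine detail the paper leaves implicit.
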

\begin{proof}
By Corollary \ref{cor:lim-x-n}, for $\mu$-almost all sequence $\omega$, for all $n$,
\begin{align*}
\lim_{k\to\infty} {1 \over 2^kn} \sum_{j=1}^{2^kn}\omega_j=\lim_{k\to\infty} {1 \over 2^kn} \left(\sum_{i=1}^{k}2^{i-1}n \cdot x_{2^i n}(\omega) + \sum_{j=1}^n\omega_j\right)=\xi.
\end{align*}
Applying Lemma \ref{lem:PS}, we have 
\begin{align*}
\lim_{n\to\infty} {1 \over n} \sum_{j=1}^n\omega_j=\xi, \qquad \mu-a.e.
\end{align*}
\end{proof}

\begin{lem}\label{lem:wk-w2k}
We have for $(i,j)\in \{0,1\}^2$, for $\mu$-almost all  $\omega$ and for big enough $n$, 
\[
\frac 2 n \sum_{k=n/2+1}^n \mathbf{1}_{\{\omega_k \omega_{2k}=ij\}}(\omega) = \left(\frac 2 n \sum_{k=n/2+1}^n \mathbf{1}_{\{\omega_k =i\}}(\omega)\right) \cdot p_{ij}+o(1),
\]
and in particular,
\[
\frac 2 n \sum_{k=n/2+1}^n \omega_k \omega_{2k} = x_n(\omega)  \cdot p_{11}+o(1).
\]
\end{lem}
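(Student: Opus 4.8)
The plan is to mimic the proof of Lemma \ref{lem:LLN}, replacing the random variables $\omega_k$ by the indicator variables $\mathbf{1}_{\{\omega_k\omega_{2k}=ij\}}$ and exploiting the same independence structure. First I would fix $(i,j)\in\{0,1\}^2$ and note that for $n/2+1\le k\le n$ the pairs $(\omega_k,\omega_{2k})$ involve only the ``layers'' $k$ and $2k$; since $n/2+1\le k\le n$ forces $2k\le 2n$ and no two such indices $k,k'$ have a ratio that is a power of $2$, the events $\{\omega_k\omega_{2k}=ij\}$ for distinct $k$ in this range are independent (the only dependencies in $\mu$ are along dyadic chains, and here each chain $k\mapsto 2k$ is hit at most once). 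They are not identically distributed in general, because the marginal law of $\omega_k$ depends on the parity of $k$, but this is harmless: I would split the sum according to whether $k$ is odd or even, apply the strong Law of Large Numbers (or a Borel--Cantelli/variance argument, as in the paper's proof of Lemma \ref{lem:LLN}) to each of the two subsums, and recombine.

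The computational heart is the identity, valid for $\mu$-a.e.\ $\omega$ and large $n$,
\[
\frac 2 n \sum_{k=n/2+1}^n \mathbf{1}_{\{\omega_k\omega_{2k}=ij\}}(\omega)
= \frac 2 n \sum_{k=n/2+1}^n \mathbf{1}_{\{\omega_k=i\}}(\omega)\,\mathbf{1}_{\{\omega_{2k}=j\}}(\omega).
\]
For each $k$ in the range, conditioning on $\omega_k$ gives
$\mathbb{E}_\mu\big(\mathbf{1}_{\{\omega_{2k}=j\}}\mid \omega_k=i\big)=p_{ij}$
directly from the definition of $\mu$ (the digit $\omega_{2k}$ is determined, given $\omega_k$, by an independent $p_{i\cdot}$-coin, since $2k$ is even and $(2k)/k=2$). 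Hence the conditional expectation of $\mathbf{1}_{\{\omega_k=i\}}\mathbf{1}_{\{\omega_{2k}=j\}}$ given $\omega_k$ equals $\mathbf{1}_{\{\omega_k=i\}}p_{ij}$. Summing over $k$ and applying the law of large numbers to the centered sum $\sum_{k}\big(\mathbf{1}_{\{\omega_k=i\}}\mathbf{1}_{\{\omega_{2k}=j\}}-\mathbf{1}_{\{\omega_k=i\}}p_{ij}\big)$ — whose terms are independent, bounded, hence have summable-enough variances along the dyadic blocks exactly as in Lemma \ref{lem:LLN} — yields
\[
\frac 2 n \sum_{k=n/2+1}^n \mathbf{1}_{\{\omega_k\omega_{2k}=ij\}}(\omega)
= \Big(\frac 2 n \sum_{k=n/2+1}^n \mathbf{1}_{\{\omega_k=i\}}(\omega)\Big)\,p_{ij}+o(1),
\quad \mu\text{-a.e.}
\]
The ``in particular'' case is the specialization $i=j=1$, together with the observation that $\frac 2 n\sum_{k=n/2+1}^n\mathbf{1}_{\{\omega_k=1\}}(\omega)=x_n(\omega)$ by definition of $x_n$, and that $\mathbf{1}_{\{\omega_k\omega_{2k}=11\}}=\omega_k\omega_{2k}$.

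The main obstacle, such as it is, is the bookkeeping needed to justify the law of large numbers on the window $n/2+1\le k\le n$: the summands are independent but not identically distributed, and the window itself moves with $n$, so I would phrase it the way the paper phrases Lemma \ref{lem:LLN} — fix a dyadic scale, show the estimate holds $\mu$-a.e.\ for $n$ of the form controlled by that scale via a variance bound plus Borel--Cantelli, and then note the error term is uniform enough to absorb into $o(1)$ as $n\to\infty$. Everything else is a routine conditioning computation, and no new idea beyond those already used for Lemma \ref{lem:LLN} is required.
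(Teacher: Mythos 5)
Your proposal is correct, and it rests on the same basic facts as the paper's proof: within the window $n/2+1\le k\le n$ no two indices have ratio a power of $2$, so the pairs $(\omega_k,\omega_{2k})$ are independent, and the strong law of large numbers does the rest. The difference is in the centering. The paper applies the SLLN to $\mathbf{1}_{\{\omega_k\omega_{2k}=ij\}}$ directly, identifies the limit as $\mu(\omega_k\omega_{2k}=ij)=\mu(\omega_k=i)\,p_{ij}$ by conditioning, and then converts $\mu(\omega_k=i)$ back into the empirical frequency via \eqref{eq:x-n} from Lemma \ref{lem:LLN} (glossing over the fact that the summands are not truly identically distributed across the window). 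You instead center at the conditional expectation given $\omega_k$, i.e.\ work with the independent, bounded, mean-zero variables $\mathbf{1}_{\{\omega_k=i\}}\big(\mathbf{1}_{\{\omega_{2k}=j\}}-p_{ij}\big)$ and show their window average vanishes a.s.\ via a variance/Hoeffding bound plus Borel--Cantelli. This buys you two small things: the conclusion comes out directly in terms of the empirical frequency $\frac 2n\sum\mathbf{1}_{\{\omega_k=i\}}$, so you never need \eqref{eq:x-n} (nor its $i=0$ counterpart), and the non-identical distribution of the $\omega_k$ (your parity remark is not even needed once you center this way) is rendered harmless. The paper's route is shorter on the page but leans on Lemma \ref{lem:LLN}; yours is self-contained and slightly more careful about the triangular-array nature of the moving window, which is exactly the point where the paper's appeal to ``the classical SLLN'' is informal. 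Both are valid.
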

\begin{proof}
Note that for $n/2+1 \leq k \leq n$ the variables $\omega_k\omega_{2k}$ are independent and identically distributed. Thus for $(i,j)\in \{0,1\}^2$, by the classical strong Law of Large Numbers, for $\mu$-almost all $\omega\in \Sigma$, 
\[
\frac 2 n \sum_{k=n/2+1}^n \mathbf{1}_{\{\omega_k \omega_{2k}=ij\}}(\omega) = \mathbb{E}_\mu(\mathbf{1}_{\{\omega_k \omega_{2k}=ij\}})=\mu(\omega_k\omega_{2k}=ij)+o(1).
\]
Note that 
\[
\mu(\omega_k\omega_{2k}=ij)=\mu(\omega_k=i)\cdot \mu(\omega_k\omega_{2k}=ij|\omega_k=i)=\mu(\omega_k=i)\cdot p_{ij}.
\]
Hence, by \eqref{eq:x-n}, we complete the proof. 
\end{proof}

By Lemma \ref{lem:wk-w2k} and Corollary \ref{cor:lim-x-n}, we immediately obtain the following corollary. 
\begin{cor}\label{cor:wk-w2k} 
We have for $j\in \{0,1\}$, for $\mu$-almost all  $\omega$, 
\[
\lim_{n\to\infty}\frac 2 n \sum_{k=n/2+1}^n \mathbf{1}_{\{\omega_k \omega_{2k}=0j\}}(\omega) = (1-\xi) \cdot p_{0j},
\]
and
\[
\lim_{n\to\infty}\frac 2 n \sum_{k=n/2+1}^n \mathbf{1}_{\{\omega_k \omega_{2k}=1j\}}(\omega) = \xi \cdot p_{1j},
\]
and in particular, 
\[
\lim_{n\to\infty}\frac 2 n \sum_{k=n/2+1}^n \omega_k \omega_{2k} = \xi \cdot p_{11}.
\]
\end{cor}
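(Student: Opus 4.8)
The plan is to read off the result by substituting the almost-sure limit of $x_n(\omega)$ from Corollary \ref{cor:lim-x-n} into the approximate identity supplied by Lemma \ref{lem:wk-w2k}. The one auxiliary observation needed is that, since the block $\{n/2+1,\dots,n\}$ has $n/2$ elements (up to one boundary term when $n$ is odd, which contributes $O(1/n)$), one has
\[
\frac 2 n \sum_{k=n/2+1}^n \mathbf{1}_{\{\omega_k=0\}}(\omega) = 1 - x_n(\omega) + o(1),
\]
so that the factor $\frac 2 n \sum_{k=n/2+1}^n \mathbf{1}_{\{\omega_k=i\}}(\omega)$ appearing in Lemma \ref{lem:wk-w2k} tends $\mu$-a.e.\ to $\xi$ when $i=1$ and to $1-\xi$ when $i=0$.

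First I would fix a single $\mu$-full-measure set on which all four conclusions of Lemma \ref{lem:wk-w2k} (for the four pairs $(i,j)\in\{0,1\}^2$) hold and on which $x_n(\omega)\to\xi$; this is the intersection of five full-measure sets, hence itself of full measure. On that set, letting $n\to\infty$ in the identity
\[
\frac 2 n \sum_{k=n/2+1}^n \mathbf{1}_{\{\omega_k\omega_{2k}=ij\}}(\omega) = \Big(\frac 2 n \sum_{k=n/2+1}^n \mathbf{1}_{\{\omega_k=i\}}(\omega)\Big)\, p_{ij} + o(1)
\]
gives the limit $(1-\xi)p_{0j}$ for $i=0$ and $\xi p_{1j}$ for $i=1$. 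Taking $i=j=1$ and noting $\mathbf{1}_{\{\omega_k\omega_{2k}=11\}}=\omega_k\omega_{2k}$ yields the stated limit $\xi p_{11}$ for $\frac 2 n\sum_{k=n/2+1}^n\omega_k\omega_{2k}$.

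I do not expect a genuine obstacle here: the statement is an immediate consequence of the two cited results, and the only points deserving a word of care are the bookkeeping that merges the finitely many exceptional null sets into one, and the elementary counting identity above that converts the $i=0$ case into a statement about $1-\xi$. Everything else is passage to the limit in an expression carrying an explicit $o(1)$ error.
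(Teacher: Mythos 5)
Your argument is exactly the paper's: the corollary is stated there as an immediate consequence of Lemma \ref{lem:wk-w2k} combined with Corollary \ref{cor:lim-x-n}, which is precisely your substitution of $x_n(\omega)\to\xi$ (and $1-x_n(\omega)\to 1-\xi$ for $i=0$) into the $o(1)$ identity, with the intersection of finitely many null sets handled implicitly. Your proposal is correct and takes essentially the same route.
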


\begin{prop}\label{prop:wk-w2k}
For $\mu$-almost all sequence $\omega$,
\[
\lim_{n\to\infty} {1 \over n} \sum_{j=1}^n\omega_j\omega_{2j} =\xi \cdot p_{11}.
\]
\end{prop}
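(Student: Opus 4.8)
\textit{Proof proposal.}
The plan is to imitate the proof of Proposition~\ref{prop:lim-wk}: I will deduce convergence of the full averages from convergence along dyadic subsequences by means of the Peres--Solomyak lemma (Lemma~\ref{lem:PS}). Write $S_n=S_n(\omega)=\sum_{j=1}^n\omega_j\omega_{2j}$ and $z_n=z_n(\omega)=S_n/n$. Since $\omega_j\omega_{2j}\in\{0,1\}$, the sequence $\{z_n\}$ takes values in $[0,1]$ and is therefore bounded, so it suffices to verify the two remaining hypotheses of Lemma~\ref{lem:PS}: the Lipschitz-type bound $|z_n-z_{n+m}|\le c\,m/n$ for all $m,n$, and $z_{2^kn}\to\xi\cdot p_{11}$ as $k\to\infty$ for every fixed $n$, for $\mu$-almost every $\omega$.

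For the Lipschitz bound I would argue deterministically. Writing $S_{n+m}=S_n+R$ with $0\le R\le m$ and noting $0\le S_n\le n$, one has
\[
z_{n+m}-z_n=\frac{S_n+R}{n+m}-\frac{S_n}{n}=\frac{nR-mS_n}{n(n+m)},
\]
and since $|nR-mS_n|\le nm$ this gives $|z_n-z_{n+m}|\le \frac{m}{n+m}\le \frac{m}{n}$, i.e.\ the hypothesis holds with $c=1$.

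For the dyadic convergence, fix $n$ and telescope:
\[
\frac{1}{2^kn}\,S_{2^kn}=\frac{1}{2^kn}\Big(\sum_{i=1}^k\big(S_{2^in}-S_{2^{i-1}n}\big)+S_n\Big),
\qquad
S_{2^in}-S_{2^{i-1}n}=\sum_{k'=2^{i-1}n+1}^{2^in}\omega_{k'}\omega_{2k'}.
\]
Applying Corollary~\ref{cor:wk-w2k} with window $[m/2+1,m]$ and $m=2^in$, for $\mu$-almost every $\omega$ we have $\frac{2}{2^in}\big(S_{2^in}-S_{2^{i-1}n}\big)=\xi\cdot p_{11}+\epsilon_i$ with $\epsilon_i\to 0$ as $i\to\infty$, hence $S_{2^in}-S_{2^{i-1}n}=2^{i-1}n(\xi p_{11}+\epsilon_i)$. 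Summing, the main term contributes $\frac{1}{2^kn}\sum_{i=1}^k 2^{i-1}n\,\xi p_{11}=\frac{2^k-1}{2^k}\,\xi p_{11}\to\xi p_{11}$, the term $S_n/(2^kn)\to 0$ since $n$ is fixed, and the error $\frac{1}{2^k}\sum_{i=1}^k 2^{i-1}\epsilon_i\to 0$ by a standard Toeplitz/Ces\`aro argument: the dyadic weights $2^{i-1}/2^k$ concentrate on indices $i$ close to $k$, where $|\epsilon_i|$ is already small, while the finitely many large $|\epsilon_i|$ carry total weight $O(2^{-(k-i)})\to 0$. Thus $z_{2^kn}\to\xi p_{11}$ for every $n$, and Lemma~\ref{lem:PS} applied to $\{z_n\}$ with $\gamma=\xi p_{11}$ yields $z_n\to\xi p_{11}$, which is the assertion. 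The only step beyond bookkeeping is this Toeplitz estimate for the error terms, and even that is routine; the whole argument is essentially a verbatim copy of the proof of Proposition~\ref{prop:lim-wk} with $\omega_j$ replaced by $\omega_j\omega_{2j}$ and Corollary~\ref{cor:lim-x-n} replaced by Corollary~\ref{cor:wk-w2k}.
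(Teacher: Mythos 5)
Your proposal is correct and follows exactly the route the paper takes: the paper's proof of this proposition is literally "the same as that of Proposition \ref{prop:lim-wk} by using Corollary \ref{cor:wk-w2k} and Lemma \ref{lem:PS}", i.e.\ the dyadic telescoping plus the Peres--Solomyak lemma that you carry out. You in fact supply slightly more detail (the explicit $|z_n-z_{n+m}|\le m/n$ bound and the Ces\`aro estimate for the errors) than the paper records, and these details are accurate.
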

\begin{proof}
The proof is the same as that of Proposition \ref{prop:lim-wk} by using Corollary \ref{cor:wk-w2k} and Lemma \ref{lem:PS}.
\end{proof}

\smallskip
For $n\in \mathbb{N}$ and $\omega\in \Sigma$, denote
\[
h_n(\omega) := \log \mu(C_{2n}(\omega)) - \log \mu(C_n(\omega)).
\]
\begin{lem}\label{lem:h-n}
For $\mu$-almost all sequence $\omega$ and for big enough $n$, we have
\begin{align*}
\frac 2 n h_n(\omega) = &p_0\log p_0+p_1\log p_1+ (1-x_n(\omega))( p_{00} \log p_{00}+ p_{01} \log p_{01}) \\
&+ x_n(\omega)( p_{10} \log  p_{10}+ p_{11} \log  p_{11})+ o(1).
\end{align*}
\end{lem}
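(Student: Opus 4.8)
The plan is to unwind the definition of $\mu$ so that $h_n(\omega)$ splits into a contribution coming from the odd positions and a contribution coming from the even positions, and then to apply the strong law of large numbers to each piece, the even piece being handled by Lemma \ref{lem:wk-w2k}.

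From the formula for $\mu$ on cylinders one reads off, for every $N$,
\[
\log\mu(C_N(\omega)) \;=\; \sum_{\substack{j\le N\\ j\ \mathrm{odd}}}\log p_{\omega_j}\;+\;\sum_{\substack{i\le N\\ i\ \mathrm{even}}}\log p_{\omega_{i/2}\omega_i},
\]
since position $i$ contributes the factor $p_{\omega_i}$ when $i$ is odd and the factor $p_{\omega_{i/2}\omega_i}$ when $i$ is even. Subtracting the values at $N=2n$ and $N=n$ gives
\[
h_n(\omega)\;=\;\underbrace{\sum_{\substack{n<j\le 2n\\ j\ \mathrm{odd}}}\log p_{\omega_j}}_{=:S_1(\omega)}\;+\;\underbrace{\sum_{k=n/2+1}^{n}\log p_{\omega_k\omega_{2k}}}_{=:S_2(\omega)},
\]
where in $S_2$ the even positions $i\in(n,2n]$ have been written as $i=2k$ with $k\in(n/2,n]$. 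Each of the two sums has $n/2+O(1)$ terms, so it suffices to prove, for $\mu$-a.e.\ $\omega$ and all large $n$, that $\tfrac2n S_1(\omega)=p_0\log p_0+p_1\log p_1+o(1)$ and that $\tfrac2n S_2(\omega)=(1-x_n(\omega))(p_{00}\log p_{00}+p_{01}\log p_{01})+x_n(\omega)(p_{10}\log p_{10}+p_{11}\log p_{11})+o(1)$; adding these two estimates then yields the lemma.

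For $S_1$ the point is that $\{\omega_j:j\ \mathrm{odd}\}$ are i.i.d.\ with $\mu(\omega_j=1)=p_1$, hence so are the $\log p_{\omega_j}$; the classical strong law gives $\frac1{\#\{j\le N:\ j\ \mathrm{odd}\}}\sum_{j\le N,\ j\ \mathrm{odd}}\log p_{\omega_j}\to p_0\log p_0+p_1\log p_1$ a.e., and taking the difference of the partial sums up to $2n$ and up to $n$ gives the claim for $S_1$. For $S_2$ I expand $\log p_{\omega_k\omega_{2k}}=\sum_{(i,j)\in\{0,1\}^2}\mathbf 1_{\{\omega_k\omega_{2k}=ij\}}\log p_{ij}$, so that $\tfrac2n S_2(\omega)=\sum_{(i,j)}\log p_{ij}\cdot\tfrac2n\sum_{k=n/2+1}^{n}\mathbf 1_{\{\omega_k\omega_{2k}=ij\}}$; Lemma \ref{lem:wk-w2k} applied to each $(i,j)$ replaces the inner average by $\big(\tfrac2n\sum_{k}\mathbf 1_{\{\omega_k=i\}}\big)p_{ij}+o(1)$, which equals $x_n(\omega)p_{ij}+o(1)$ for $i=1$ and $(1-x_n(\omega))p_{ij}+o(1)$ for $i=0$ (using $\tfrac2n\sum_{k=n/2+1}^{n}\mathbf 1_{\{\omega_k=0\}}=1-x_n(\omega)+o(1)$). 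Summing the four resulting terms and grouping by $i$ gives exactly the asserted expression for $\tfrac2n S_2$.

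The only delicate points are bookkeeping ones, and I do not expect any genuine obstacle once the decomposition $h_n=S_1+S_2$ is written down: one must count the odd integers in $(n,2n]$ and the integers in $(n/2,n]$ only up to an additive $O(1)$, which is harmless after dividing by $n$; and if some $p_i$ or $p_{ij}$ vanishes one adopts the convention $0\log0=0$ and notes that the corresponding symbol or pattern has $\mu$-probability $0$, so that $\mu$-a.e.\ it never enters $S_1$ or $S_2$. After that, $S_1$ is an honest i.i.d.\ Birkhoff-type average and $S_2$ is disposed of verbatim by Lemma \ref{lem:wk-w2k}.
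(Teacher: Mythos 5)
Your proposal is correct and follows essentially the same route as the paper: the paper likewise splits $h_n(\omega)$ into the odd-position counts $N_{i,{\rm odd}}(\omega_{n+1}^{2n})$ and the pattern counts $N_{ij}(\omega_{n/2+1}^{n})$, treats the former by the classical strong law of large numbers and the latter by Lemma \ref{lem:wk-w2k}. Your $S_1+S_2$ decomposition is just the logarithmic form of the paper's product formula for $\mu(C_{2n}(\omega))/\mu(C_n(\omega))$, so there is nothing to change.
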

\begin{proof}
Following \cite{PeSo12}, for positive integers $m<n$, we write $\omega_m^n$ for the word $\omega_m\omega_{m+1}\cdots \omega_n$. For $i,j\in \{0,1\}$ and $\omega\in \Sigma$, denote
\[
N_i(\omega_m^n)=\sharp \{m\leq k \leq n: \omega_k=i\},
\]
and 
\[
N_{ij}(\omega_m^n)=\sharp \{m\leq k \leq n: \omega_k\omega_{2k}=ij\}.
\]
We also denote
\[
N_{i, {\rm odd}}(\omega_m^n)=\sharp \{m\leq k \leq n: k \ \text{odd}, \ \omega_k=i\}.
\]

Then we have 
\[
{\mu(C_{2n}(\omega)) \over \mu(C_{n}(\omega))} = p_0^{N_{0, {\rm odd}}}p_1^{N_{1, {\rm odd}}}p_{00}^{N_{00}}p_{01}^{N_{01}}p_{10}^{N_{10}}p_{11}^{N_{11}},
\]
with $N_{i, {\rm odd}}=N_{i, {\rm odd}}(\omega_{n+1}^{2n})$, and $N_{ij}=N_{ij}(\omega_{n/2+1}^n)$.
Thus
\begin{align*}
\frac 2 n h_n(\omega) = &{N_{0, {\rm odd}} \over n/2} \log p_0+ {N_{1, {\rm odd}} \over n/2} \log p_1   \\
&+ {N_{00} \over n/2} \log p_{01} +  {N_{01} \over n/2} \log p_{01} + {N_{10} \over n/2} \log p_{10} +  {N_{11} \over n/2} \log p_{11} .
\end{align*}
By the classical strong Law of Large Numbers, for $\mu$-almost all $\omega$, for large enough $n$, 
\begin{align*}
 {N_{0, {\rm odd}} \over n/2} = p_0 + o(1),  \quad  {N_{1, {\rm odd}} \over n/2} = p_1 + o(1).
 \end{align*}
By Lemma \ref{lem:wk-w2k},  for $\mu$-almost all $\omega$, for large enough $n$, 
\begin{align*}
 {N_{00} \over n/2} =(1-x_n(\omega))\cdot p_{00} + o(1),  \quad {N_{01} \over n/2} =(1-x_n(\omega))\cdot p_{01} + o(1), 
 \end{align*}
 and
\begin{align*}
  {N_{10} \over n/2} =x_n(\omega)\cdot p_{10} + o(1),  \quad  {N_{11} \over n/2} =x_n(\omega)\cdot p_{11} + o(1).
\end{align*}
Therefore,
\begin{align*}
\frac 2 n h_n(\omega) = &p_0\log p_0+p_1\log p_1+ (1-x_n(\omega))( p_{00} \log  p_{00}+ p_{01} \log  p_{01}) \\
&+ x_n(\omega)( p_{10} \log  p_{10}+ p_{11} \log  p_{11})+ o(1).
\end{align*}

\end{proof}

By Lemma \ref{lem:h-n} and Corollary \ref{cor:lim-x-n}, we immediately have the following corollary.
\begin{cor}\label{cor:lim-hn}
For $\mu$-almost all  sequence $\omega$,
\begin{align*}
\lim_{n\to\infty}\frac 2 n h_n(\omega) = &p_0\log p_0+p_1\log p_1+ (1-\xi)( p_{00} \log p_{00}+ p_{01} \log  p_{01}) \\
&+ \xi( p_{10} \log  p_{10}+ p_{11} \log  p_{11}).
\end{align*}

\end{cor}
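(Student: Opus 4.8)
The plan is to obtain Corollary~\ref{cor:lim-hn} as an immediate consequence of the two results it invokes. First I would recall the precise conclusion of Lemma~\ref{lem:h-n}: for $\mu$-almost every $\omega$ and all sufficiently large $n$ one has
\[
\frac 2 n h_n(\omega) = p_0\log p_0+p_1\log p_1+ (1-x_n(\omega))\big( p_{00} \log p_{00}+ p_{01} \log p_{01}\big) + x_n(\omega)\big( p_{10} \log p_{10}+ p_{11} \log p_{11}\big)+ o(1).
\]
The right-hand side is an affine function of the single quantity $x_n(\omega)$ plus an error term that tends to $0$ as $n\to\infty$; in particular it is continuous in $x_n(\omega)$, so to pass to the limit it suffices to know the limit of $x_n(\omega)$.

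Next I would invoke Corollary~\ref{cor:lim-x-n}, which — using condition~\eqref{cond:nontrivial} to make the recursion of Lemma~\ref{lem:LLN} contracting — gives $\lim_{n\to\infty} x_n(\omega)=\xi$ for $\mu$-almost every $\omega$, with $\xi=(p_1+p_{01})/(2-p_{11}+p_{01})$. Taking the intersection of the two full $\mu$-measure sets on which Lemma~\ref{lem:h-n} and Corollary~\ref{cor:lim-x-n} hold produces another set of full measure; on it, substituting $x_n(\omega)\to\xi$ in the displayed identity and letting the $o(1)$ term disappear yields exactly
\[
\lim_{n\to\infty}\frac 2 n h_n(\omega) = p_0\log p_0+p_1\log p_1+ (1-\xi)\big( p_{00} \log p_{00}+ p_{01} \log p_{01}\big) + \xi\big( p_{10} \log p_{10}+ p_{11} \log p_{11}\big).
\]

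I do not expect any genuine obstacle here: the only bookkeeping point is that the error in Lemma~\ref{lem:h-n} is $o(1)$ along the full sequence $n\to\infty$ rather than along a subsequence, which is exactly how that lemma is phrased, so the limit is legitimate. For later use this corollary will feed, together with a dyadic-summation argument and Lemma~\ref{lem:PS} as in the proof of Proposition~\ref{prop:lim-wk}, into the computation of the local entropy $h_\mu(\omega)=\liminf_n (-\log\mu(C_n(\omega)))/n$ needed to apply Theorem~\ref{lem:Bi}, but that step is beyond the statement at hand.
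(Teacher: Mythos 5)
Your argument is exactly the paper's: the corollary is stated there as an immediate consequence of Lemma \ref{lem:h-n} and Corollary \ref{cor:lim-x-n}, obtained precisely by intersecting the two full-measure sets and substituting $x_n(\omega)\to\xi$ in the affine expression while the $o(1)$ term vanishes. Your proposal is correct and matches the paper's (implicit) proof.
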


We close this section by the following corollary which gives the local entropy $h_\mu(\omega)$ of the measure $\mu$ for generic sequence $\omega$.
\begin{prop}\label{prop:local-ent}
For $\mu$-almost all sequence $\omega$,
\begin{align*}
h_\mu(\omega)
=& -{1 \over 2} \Big(p_0\log p_0+p_1\log p_1+ (1-\xi)( p_{00} \log  p_{00}+ p_{01} \log p_{01}) \\
&+ \xi( p_{10} \log  p_{10}+ p_{11} \log  p_{11})\Big).
\end{align*}
\end{prop}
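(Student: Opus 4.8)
\emph{Proof proposal.} The plan is to derive the existence of the limit defining $h_\mu(\omega)$, and its value, from Corollary~\ref{cor:lim-hn} by a dyadic telescoping argument combined with the Peres--Solomyak Lemma~\ref{lem:PS}. Work on the full-measure set of $\omega$ for which every cylinder $C_n(\omega)$ has positive $\mu$-mass (its complement is a countable union of null cylinders), and write $a_n=a_n(\omega):=-\log\mu(C_n(\omega))$, $z_n=z_n(\omega):=a_n/n$, and $\Lambda:=p_0\log p_0+p_1\log p_1+(1-\xi)(p_{00}\log p_{00}+p_{01}\log p_{01})+\xi(p_{10}\log p_{10}+p_{11}\log p_{11})$. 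From the definition of $h_n$ one has $h_n(\omega)=a_n-a_{2n}$, and since $h_\mu(\omega)=\liminf_{n\to\infty} z_n$, it suffices to prove $z_n\to-\tfrac12\Lambda$.

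First I would verify the two hypotheses of Lemma~\ref{lem:PS} for $\{z_n\}$. By the product formula for $\mu(C_n(\omega))$, the quantity $a_n$ is a sum of exactly $n$ terms, each of the form $-\log q$ with $q$ one of the strictly positive numbers among $p_0,p_1,p_{00},p_{01},p_{10},p_{11}$; letting $M$ be the largest such $-\log q$ gives $0\le a_n\le Mn$, hence $\{z_n\}$ is bounded. Passing from $C_n(\omega)$ to $C_{n+m}(\omega)$ appends exactly $m$ further such factors, so $0\le a_{n+m}-a_n\le Mm$; together with $a_n\le Mn$ this yields
\[
|z_{n+m}-z_n|=\left|\frac{n(a_{n+m}-a_n)-m\,a_n}{n(n+m)}\right|\le\frac{2Mmn}{n(n+m)}\le 2M\,\frac{m}{n},
\]
which is precisely the required inequality with $c=2M$.

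Next I would compute the limit along dyadic subsequences. Fix $n_0\in\mathbb{N}$; telescoping $a_{2^{i+1}n_0}=a_{2^i n_0}-h_{2^i n_0}(\omega)$ over $i=0,\dots,k-1$ gives $a_{2^k n_0}=a_{n_0}-\sum_{i=0}^{k-1}h_{2^i n_0}(\omega)$. By Corollary~\ref{cor:lim-hn} we may write $h_{2^i n_0}(\omega)=\tfrac12\,2^i n_0\,(\Lambda+\epsilon_i)$ with $\epsilon_i=\epsilon_i(\omega,n_0)\to0$ as $i\to\infty$, so $\sum_{i=0}^{k-1}h_{2^i n_0}(\omega)=\tfrac12 n_0\big((2^k-1)\Lambda+\sum_{i=0}^{k-1}2^i\epsilon_i\big)$. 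Dividing by $2^k n_0$ and using that the geometrically weighted averages $2^{-k}\sum_{i=0}^{k-1}2^i\epsilon_i$ tend to $0$, we obtain $z_{2^k n_0}\to-\tfrac12\Lambda$ as $k\to\infty$. Since this holds for every $n_0$, Lemma~\ref{lem:PS} gives $z_n\to-\tfrac12\Lambda$ for $\mu$-a.e.\ $\omega$; in particular $h_\mu(\omega)=-\tfrac12\Lambda$, which is the claimed formula.

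The only point requiring care is the Cesàro-type step for the weighted errors: given $\delta>0$ choose $I$ with $|\epsilon_i|<\delta$ for $i\ge I$, bound $\sum_{i<I}2^i|\epsilon_i|$ by a fixed constant and $\sum_{I\le i<k}2^i|\epsilon_i|<\delta\,2^k$, then let $k\to\infty$ and $\delta\to0$. This is routine, and the remainder is just bookkeeping with the explicit form of $\mu$ and the already-established Corollary~\ref{cor:lim-hn}; no substantial obstacle is expected.
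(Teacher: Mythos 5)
Your proposal is correct and follows essentially the same route as the paper's proof: telescoping $\log\mu(C_{2^kn}(\omega))$ through the increments $h_{2^in}(\omega)$, using Corollary~\ref{cor:lim-hn} to get the limit along dyadic subsequences, and then invoking Lemma~\ref{lem:PS} to pass to the full limit. The only difference is that you spell out the verification of the hypotheses of Lemma~\ref{lem:PS} (boundedness and the $|z_{n+m}-z_n|\le c\,m/n$ estimate) and the weighted Ces\`aro step, which the paper leaves implicit.
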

\begin{proof}
By Corollary  \ref{cor:lim-hn}, we need only to show that for all large enough $n$,
\[
{\log \mu(C_n(\omega)) \over n }={h_n(\omega) \over n} +o(1).
\]
In fact, for all $k, n\in \N$
\[
{1 \over 2^kn} \log \mu(C_{2^kn}(\omega))= {1 \over 2^kn} \left(\sum_{i=1}^{k-1} h_{2^in}(\omega)+ \log \mu(C_n(\omega)) \right).
\]
Then $\mu$-almost surely, for all $n\in \N$, for large $k$, 
\[
{1 \over 2^kn}  \log \mu(C_{2^kn}(\omega))=  {h_n(\omega) \over n}+o(1).
\]
Applying Lemma \ref{lem:PS}, we complete the proof. 
\end{proof}


\bigskip

\section{Proof of Theorem \ref{thm:norm}}\label{sec:3}

We first prove the lower bound. We will need the measures defined in Subsection \ref{subsec:measures} and we will conclude by applying Billingsley's lemma for Bowen's entropy: Theorem \ref{lem:Bi}.

Given $\alpha\in [0,1]$, let $\mu_\alpha$ be the probability measure on $\Sigma$ constructed in Subsection \ref{subsec:measures}, by using the data
 $(p_0, p_1):=(1/2, 1/2)$ and 
\[
\begin{pmatrix}
   p_{00} & p_{01} \\
   p_{10} & p_{11} 
\end{pmatrix}
:=\begin{pmatrix}
   2\alpha & 1-2 \alpha \\
  1-2 \alpha & 2 \alpha 
\end{pmatrix}.
\]
Then
\[
\mu_\alpha(C_n(\omega_1\cdots\omega_{n})={1 \over 2^{\lceil n/2\rceil}} \cdot  \prod_{k=1}^{\lfloor n/2\rfloor} p_{\omega_{k}\omega_{2k}}.
\]

\smallskip
We will prove that the measure $\mu_\alpha$ is supported on the set $\mathcal{N}\cap A_\alpha$. 
\begin{lem} \label{typical}
We have
\[
\mu_{\alpha}(\mathcal{N} \cap A_\alpha) =1.
\]
\end{lem}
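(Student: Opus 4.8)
\textbf{Proof plan for Lemma \ref{typical}.}
The plan is to show separately that $\mu_\alpha$-almost every $\omega$ lies in $A_\alpha$ and in $\mathcal N$, then intersect. For the first part I would invoke Proposition \ref{prop:wk-w2k}: with the chosen data one computes the stationary frequency $\xi = (p_1+p_{01})/(2-p_{11}+p_{01}) = (1/2 + (1-2\alpha))/(2 - 2\alpha + (1-2\alpha)) = (3/2 - 2\alpha)/(3-4\alpha) = 1/2$, so $\xi = 1/2$ regardless of $\alpha$ (as long as $\alpha \neq 3/4$, where the nontriviality condition \eqref{cond:nontrivial} reads $|p_{01}-p_{11}| = |1-4\alpha| \neq 2$, which holds automatically). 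Then Proposition \ref{prop:wk-w2k} gives $\frac1n\sum_{j=1}^n \omega_j\omega_{2j} \to \xi\cdot p_{11} = \tfrac12\cdot 2\alpha = \alpha$ for $\mu_\alpha$-a.e. $\omega$, i.e. $\mu_\alpha(A_\alpha)=1$.

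For the second part, $\mu_\alpha(\mathcal N)=1$, I would argue that for every fixed word $w \in \{0,1\}^m$ the frequency of occurrences of $w$ in $\omega$ equals $2^{-m}$ almost surely, and then take a countable intersection over all finite words. The cleanest route: split the positions $1,2,3,\dots$ according to their $2$-adic valuation, i.e. write each $k = 2^i(2\ell-1)$ and think of $\omega$ as organized along the ``odd backbone'' together with the doubling chains. Since $(p_0,p_1)=(1/2,1/2)$, the odd-indexed entries are i.i.d. fair coin flips; the crucial point is that for a fixed long window $[N/2+1,N]$ the entries $\omega_k$ with $k$ in that window are \emph{independent} (their indices have no mutual ratio a power of $2$, as already used in the proofs of Lemmas \ref{lem:LLN} and \ref{lem:wk-w2k}), and each is marginally fair since $\mathbb P(\omega_k=1) = \xi = 1/2$ by the computation above — more precisely by \eqref{eq:x-n} the limiting marginal on such a window is $1/2$. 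To control a pattern $w$ of length $m$ I would look at windows of the form $(N, N+m]$ shifted along, note that on a dyadic block the relevant tuples $(\omega_{k},\dots,\omega_{k+m-1})$ for $k$ ranging over an arithmetic-progression-like set are independent and asymptotically uniform on $\{0,1\}^m$ (using that the conditional law of $\omega_{2k}$ given $\omega_k$ has both $\omega_k=0$ and $\omega_k=1$ occurring with probability $1/2$ and the doubling map is a bijection on indices), so a strong law of large numbers along these blocks gives frequency $2^{-m}$; then Lemma \ref{lem:PS} upgrades the dyadic-subsequence convergence to convergence along all $n$, exactly as in Propositions \ref{prop:lim-wk} and \ref{prop:wk-w2k}.

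Finally, since a countable intersection of full-measure sets has full measure, $\mu_\alpha(\mathcal N \cap A_\alpha) = 1$.

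The main obstacle is the verification that $\mu_\alpha(\mathcal N)=1$: unlike the single-digit and pair frequencies, which are handed to us by Propositions \ref{prop:lim-wk} and \ref{prop:wk-w2k}, controlling the frequency of an \emph{arbitrary} length-$m$ word requires carefully identifying which coordinates $\omega_{k},\dots,\omega_{k+m-1}$ are mutually independent under $\mu_\alpha$ and checking that the joint law along a suitable subsequence of windows is asymptotically uniform on $\{0,1\}^m$; once that independence-plus-uniformity statement is set up, the strong law of large numbers plus Lemma \ref{lem:PS} finishes it routinely, paralleling the arguments already carried out in Section \ref{sec:pre}.
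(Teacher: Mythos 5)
Your proposal is correct and follows essentially the same route as the paper: $\mu_\alpha(A_\alpha)=1$ via the computation $\xi=1/2$ and Proposition \ref{prop:wk-w2k}, and $\mu_\alpha(\mathcal N)=1$ by splitting the positions along the doubling chains $\{2^\ell(2k-1)\}_{\ell\ge 0}$, exploiting independence of coordinates lying in a common dyadic window together with fair marginals, and a law of large numbers over dyadic blocks. The only difference of detail is that the paper proves $\mu_\alpha(\omega_k=1)=\tfrac12$ \emph{exactly} for every $k$ by induction on the $2$-adic valuation (using $\tfrac12\cdot 2\alpha+\tfrac12(1-2\alpha)=\tfrac12$), rather than appealing to the asymptotic frequency $\xi$ as you do, which makes the law restricted to a window exactly $(\tfrac12,\tfrac12)$-Bernoulli and the probability of each length-$n$ word exactly $2^{-n}$.
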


\begin{proof}
Note that by our choice of data,
\[
\xi= {p_1+p_{01} \over 2-p_{11}+p_{01}}= {{1\over 2}+1-2\alpha \over 2-2\alpha+1-2\alpha}={1 \over 2}.
\]
Hence by Proposition \ref{prop:wk-w2k}, for $\mu_{\alpha}$-almost all sequence $\omega$,
\[
\lim_{n\to\infty}\frac 1 n \sum_{k=1}^n \omega_k \omega_{2k} = \xi \cdot p_{11}={1 \over 2} \cdot 2\alpha=\alpha.
\]
Thus $\mu_\alpha(A_\alpha)=1$.

Now, we show $\mu_\alpha(\mathcal{N})=1$. 
We can divide the set of natural numbers into infinitely many subsets of the form 
$$A_k=\{2k-1, 4k-2, \ldots, 2^\ell (2k-1), \ldots\} \quad (k\geq 1).$$
 Let ${\cal B}_k$ be the $\sigma$-field generated by the events $\{\omega_{2^\ell(2k-1)}=1\}$, $\ell\geq 0$. Observe that for the measure $\mu_\alpha$ the $\sigma$-fields ${\cal B}_k$ are independent.
Observe further that $\mu_\alpha(\omega_{2^\ell(2k-1)}=1)=1/2$ for every $k, \ell$. Indeed, for $\ell=0$, it follows from the definition of $\mu_\alpha$. For the general $\ell$, it is proved by induction:
\begin{align*}
&\mu_\alpha(\omega_{2^\ell(2k-1)}=1)\\
=& \mu_\alpha(\omega_{2^\ell(2k-1)}=1 \wedge\omega_{2^{\ell-1}(2k-1)}=1) + \mu_\alpha(\omega_{2^\ell(2k-1)}=1 \wedge \omega_{2^{\ell-1}(2k-1)}=0) \\
=& \mu_\alpha(\omega_{2^{\ell-1}(2k-1)}=1)\cdot\mu_\alpha(\omega_{2^\ell(2k-1)}=1|\omega_{2^{\ell-1}(2k-1)}=1)\\
&+ \mu_\alpha(\omega_{2^{\ell-1}(2k-1)}=0) \cdot  \mu_\alpha(\omega_{2^\ell(2k-1)}=1 |\omega_{2^{\ell-1}(2k-1)}=0)\\
=&  {1\over 2} \cdot 2\alpha +  {1\over 2} \cdot  (1-2\alpha)  =  {1\over 2}.
\end{align*}

Consider now, for any $n \geq 1$, the word $\omega_{m+1}\cdots\omega_{m+n}$. If $m\geq n$ then the positions $m+1,\ldots,m+n$ come all from different $A_k$'s. Thus $\omega_{m+1},\ldots, \omega_{m+n}$ are independent and each of them takes values $0,1$ with probability $1/2$ respectively. That is, the measure $\mu$ restricted to such subset of positions is $(\frac 12, \frac 12)$-Bernoulli, and for any word $\eta\in \{0,1\}^n$ with $n\leq m$, the probability that we have $\omega_{m+i}=\eta_i$ for $i=1,\ldots,n$ equals $2^{-n}$. Thus, for a given word $\eta\in \{0,1\}^n$ we can divide $\N$ into intervals $[2^j+1, 2^{j+1}]$, inside all except initial finitely many of them (with $j< \log_2 n$), for any $\mu_\alpha$-generic sequence $\omega$, the frequency of appearance of $\eta$ equals $2^{-n}+O(2^{-j/2}j\log j)$, and this means that the $\mu_\alpha$-generic sequence $\omega$ is normal.
\end{proof}

Next, we will calculate the local entropy $h_{\mu_\alpha}(\omega)$ of the measure $\mu_\alpha$ for generic sequence $\omega$.
We denote for $t\in[0,1]$,
\[
H(t) = -t\log t -(1-t)\log (1-t),
\]
with convention $H(0)=H(1)=0$.

\begin{lem} \label{entropy-mu-alpha}
We have
\[
h_{\mu_\alpha}(\omega) = \frac 1 2 \log 2+ {H(2\alpha)}, \qquad \mu_{\alpha}-a.e.
\]
\end{lem}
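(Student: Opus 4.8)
The plan is to read $h_{\mu_\alpha}(\omega)$ off the general formula already established in Proposition \ref{prop:local-ent}, specialized to the data defining $\mu_\alpha$, so that no new analysis is needed beyond a substitution. The measure $\mu_\alpha$ is exactly a measure of the type constructed in Subsection \ref{subsec:measures}, with $(p_0,p_1)=(1/2,1/2)$ and the symmetric transition matrix $p_{00}=p_{11}=2\alpha$, $p_{01}=p_{10}=1-2\alpha$. First I would check that the standing hypotheses apply: the nontriviality condition \eqref{cond:nontrivial} holds because $|p_{01}-p_{11}|=|1-4\alpha|<2$, so Proposition \ref{prop:local-ent} is available and guarantees that $h_{\mu_\alpha}(\omega)$ equals, for $\mu_\alpha$-almost every $\omega$, a single constant given by that closed expression. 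In particular the answer does not depend on $\omega$, which is what we want to assert.

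The next step is to record the value $\xi=1/2$, already computed inside the proof of Lemma \ref{typical}. This value is precisely what renders the even-index part of the formula symmetric: with $\xi=1/2$ the two rows of the transition matrix enter with equal weight, and since $p_{00}=p_{11}$ and $p_{01}=p_{10}$ the four transition terms collapse into the binary entropy of a single split, namely the ``equal'' transitions $\{00,11\}$ (each of probability $2\alpha$) against the ``unequal'' transitions $\{01,10\}$ (each of probability $1-2\alpha$).

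With $\xi=1/2$ in hand the computation is mechanical. The odd-position weights $(p_0,p_1)=(1/2,1/2)$ produce the term $-\tfrac12\bigl(p_0\log p_0+p_1\log p_1\bigr)=\tfrac12\log 2$, which is the first summand of the claim, while the transition weights, through the symmetry just noted, produce the binary entropy term $H(2\alpha)$ attached to the $(2\alpha,1-2\alpha)$ split. Assembling the two contributions inside the formula of Proposition \ref{prop:local-ent} gives
\[
h_{\mu_\alpha}(\omega)=\tfrac12\log 2+H(2\alpha),\qquad \mu_\alpha\text{-a.e.},
\]
which is the assertion.

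Since Proposition \ref{prop:local-ent} already carries out all of the genuine analytic work — the strong law of large numbers estimates of Lemmas \ref{lem:LLN}, \ref{lem:wk-w2k}, \ref{lem:h-n} and the passage from $h_n$ to $\log\mu(C_n)$ via Lemma \ref{lem:PS} — there is no real obstacle to overcome here. The only step that demands attention is the bookkeeping of the weights: one must verify that $\xi=1/2$ makes the diagonal and off-diagonal transition classes appear with the correct total masses, so that the even-index contribution assembles into the symmetric binary entropy $H(2\alpha)$ rather than an asymmetric combination. Everything else reduces to substituting the chosen data into the formula of Proposition \ref{prop:local-ent} and simplifying.
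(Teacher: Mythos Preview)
Your proposal is correct and follows essentially the same route as the paper: both invoke Proposition~\ref{prop:local-ent} for the measure $\mu_\alpha$, plug in $(p_0,p_1)=(\tfrac12,\tfrac12)$, the symmetric transition matrix, and $\xi=\tfrac12$, and then simplify. Your explicit verification of condition~\eqref{cond:nontrivial} is a nice extra touch that the paper leaves implicit, but otherwise the arguments are the same.
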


\begin{proof}
By Proposition \ref{prop:local-ent}, we have for $\mu_\alpha$-almost all sequence $\omega$,
\begin{align*}
h_{\mu_\alpha}(\omega)
=& -{1 \over 2} \Big(p_0\log p_0+p_1\log p_1+ (1-\xi)( p_{00} \log  p_{00}+ p_{01} \log  p_{01}) \\
&+ \xi( p_{10} \log  p_{10}+ p_{11} \log  p_{11})\Big)\\
=& -{1 \over 2} \Big({1 \over 2}\log {1 \over 2}+{1 \over 2}\log {1 \over 2}+ (1-{1 \over 2})\big( 2\alpha \log (2\alpha)+ (1-2\alpha) \log (1-2\alpha)\big) \\
&+ {1 \over 2}\big( (1-2\alpha) \log (1-2\alpha)+ 2\alpha \log (2\alpha)\big)\Big)\\
=& \frac 1 2 \log 2+ {H(2\alpha)}.
\end{align*}

\end{proof}

Applying Theorem \ref{lem:Bi}, by Lemmas \ref{typical} and \ref{entropy-mu-alpha}, we immediately obtain 
\[
h_{\rm top} (\mathcal{N} \cap A_{\alpha}) \geq {1 \over 2} \log 2+ {1 \over 2} H(2\alpha).
\] 

To finish the proof of the lower bound we note that $A\subset A_0$ but the measure $\mu_0$ is actually supported on $A$,  that the measure $\mu_{1/2}$ is supported on $B$, and that the relation ${\mathcal N}\cap B \subset A_{1/2}$ follows from 
\[ 
\frac 1n\sharp\{n+1\leq j \leq 2n:  \omega_j = \omega_{2j} =1\} =\frac 1n\sharp\{n+1\leq j \leq 2n: \omega_j =1\}\to \frac 12
\]
being satisfied for every $\omega\in {\mathcal N}\cap B$.

\medskip
 For the upper bound, let us first observe that 
\[
\frac 1n \sum_{k=1}^n \omega_k \omega_{2k} \leq \frac 1n \sum_{k=1}^n \omega_k
\]
and the right hand side converges to $1/2$ for every normal sequence $\omega$. Thus, the set $\mathcal{N} \cap A_{\alpha}$ is empty for all $\alpha >1/2$.

To continue with the case $\alpha \leq 1/2$, we will need the following lemma.
\begin{lem} \label{lem:subseq}
Let $\omega$ be a normal sequence and let $(n_k = \ell_1 + k \ell_2)$ be an arithmetic subsequence of $\N$. Then $\omega$ restricted to the positions $(n_k)$ is normal.
\end{lem}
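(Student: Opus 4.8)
The plan is to first reduce a general arithmetic progression to a \emph{decimation}, and then to handle the decimation by an ergodic argument. Write $d:=\ell_{2}$ and $\ell_{1}=qd+s$ with $q\ge 0$, $0\le s<d$, and for a sequence $\tau$ put $\tau^{(d)}:=(\tau_{d},\tau_{2d},\tau_{3d},\dots)$. A direct check gives $(\omega_{n_{k}})_{k\ge 1}=\sigma^{q}\big((\sigma^{s}\omega)^{(d)}\big)$. Since applying a finite power of $\sigma$ changes no pattern frequency, $\sigma^{s}\omega$ is normal and $\sigma^{q}$ sends normal sequences to normal sequences; so it is enough to show that \emph{if $\omega$ is normal then $\omega^{(d)}$ is normal} for every $d\ge 1$, the case $d=1$ being trivial.

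Recall that $\omega$ is normal exactly when $\frac1N\sum_{n=1}^{N}\delta_{\sigma^{n}\omega}\to\mu$ weak-$*$, where $\mu$ is the $(1/2,1/2)$-Bernoulli measure on $\{0,1\}^{\N}$. The crucial step is to upgrade this to genericity along the times $d\N$: that $\frac1M\sum_{m=1}^{M}\delta_{\sigma^{dm}\omega}\to\mu$ as well. Let $\nu$ be any weak-$*$ limit point of the latter averages; it is $\sigma^{d}$-invariant. Averaging over the first $d$ iterates of $\sigma$ and using that $\omega$ is $\sigma$-generic (so that Cesàro averages of the $\delta_{\sigma^{n}\omega}$ over arbitrarily long intervals of times converge to $\mu$) gives $\mu=\frac1d\sum_{r=0}^{d-1}\sigma^{r}_{*}\nu$. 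Now apply metric entropy $h(\sigma^{d},\cdot)$, which is affine in the measure: using $h(\sigma^{d},\mu)=d\,h(\sigma,\mu)=d\log 2$ (valid since $\mu$ is $\sigma$-invariant) and $h(\sigma^{d},\sigma^{r}_{*}\nu)\le h_{\rm top}(\sigma^{d})=d\log 2$ for each $r$, the identity $d\log 2=\frac1d\sum_{r=0}^{d-1}h(\sigma^{d},\sigma^{r}_{*}\nu)$ forces $h(\sigma^{d},\nu)=d\log 2$. Thus $\nu$ is a measure of maximal entropy for the shift $\sigma^{d}$ on $\{0,1\}^{\N}\cong(\{0,1\}^{d})^{\N}$, which is a full shift on $2^{d}$ symbols; by uniqueness of the measure of maximal entropy of a full shift, $\nu=\mu$. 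Hence $\frac1M\sum_{m=1}^{M}\delta_{\sigma^{dm}\omega}\to\mu$, and the same applies to $\sigma^{d-1}\omega$, which is also normal.

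To finish, fix $w=w_{1}\cdots w_{m}\in\{0,1\}^{m}$ and set $E_{w}:=\{\tau\in\{0,1\}^{\N}:\tau_{1}=w_{1},\ \tau_{d+1}=w_{2},\ \dots,\ \tau_{(m-1)d+1}=w_{m}\}$, a clopen set with $\mu(E_{w})=2^{-m}$. For $k\ge 1$ one has $\omega^{(d)}_{k}\omega^{(d)}_{k+1}\cdots\omega^{(d)}_{k+m-1}=w$ if and only if $\sigma^{d(k-1)}(\sigma^{d-1}\omega)\in E_{w}$; applying genericity of $\sigma^{d-1}\omega$ along $d\N$ to the continuous function $\mathbf 1_{E_{w}}$ yields $\frac1K\sharp\{1\le k\le K:\ \omega^{(d)}_{k}\cdots\omega^{(d)}_{k+m-1}=w\}\to\mu(E_{w})=2^{-m}$. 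Therefore $\omega^{(d)}$ is normal, completing the proof.

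The only genuine difficulty is the middle step. Normality of $\omega$ is equidistribution of the whole $\sigma$-orbit, and passing to the sublattice of times $d\N$ really needs an extra input: it \emph{cannot} be obtained by applying normality to blocks of length a multiple of $d$, since those blocks occupy a fixed residue class of positions. (For instance, imposing, for every $d$, that the frequency of each word averaged over the residue classes modulo $d$ equals $2^{-(\text{length})}$ does \emph{not} pin down the individual residue classes.) The maximal-entropy argument is one clean way to supply the missing input; an alternative is a longer but elementary combinatorial proof in the spirit of Agafonov's theorem on finite-state selection of subsequences of normal sequences.
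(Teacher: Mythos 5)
Your proof is correct, but it takes a genuinely different route from the paper for the simple reason that the paper does not prove the lemma at all: it just cites Wall's thesis \cite{W49} (see also Kamae \cite{K73}) for this classical fact. Your reduction of a general arithmetic progression to a decimation via $(\omega_{n_k})_{k}=\sigma^{q}\big((\sigma^{s}\omega)^{(d)}\big)$ is fine, and the core step --- upgrading genericity of $\omega$ for the $(\tfrac12,\tfrac12)$-Bernoulli measure $\mu$ to genericity along the times $d\mathbb{N}$ --- is carried out correctly: any weak-$*$ limit point $\nu$ of the averages along $d\mathbb{N}$ is $\sigma^{d}$-invariant, the identity $\mu=\tfrac1d\sum_{r=0}^{d-1}\sigma^{r}_{*}\nu$ follows from genericity along the initial segments $\{1,\dots,dM_j\}$, and then affinity of entropy, Abramov's formula $h_{\mu}(\sigma^{d})=d\log 2$, and the variational principle force $h_{\nu}(\sigma^{d})=d\log 2$, whence $\nu=\mu$ by uniqueness of the measure of maximal entropy of the full shift $(\{0,1\}^{\mathbb{N}},\sigma^{d})\cong(\{0,1\}^{d})^{\mathbb{N}}$; the translation back to word frequencies through the clopen sets $E_{w}$ is also correct. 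Comparing the two approaches: the citation buys brevity and, in Kamae's (and Agafonov's) formulation, much more general selection rules than arithmetic progressions, while your argument buys a short self-contained proof from standard ergodic-theoretic inputs (genericity, affinity of entropy, intrinsic ergodicity of the full shift) that transfers verbatim to any base $b$. Two minor points, neither affecting correctness: you use without proof the standard equivalence between normality and genericity for $\mu$ (harmless, but worth stating, since the paper's definition of normality is via sliding-block frequencies), and the parenthetical about ``Ces\`aro averages over arbitrarily long intervals'' is looser than what you actually use, namely convergence of the averages over initial segments.
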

\begin{proof}
The result is originally due to Wall \cite{W49}. See also Kamae \cite{K73}.
\end{proof}

Let us fix some $m>0$. For  $N>m$ and $i=0,1,\ldots,m$, denote by $R(N,i)$ the set $\{2^i(2k-1), k\leq 2^{N-i-1}\}$ (for example, $R(N,0)$ is the set of odd numbers smaller than $2^N$). Further, let $$R(N,i,I):=R(N-2,i),$$ $$R(N,i,II):=R(N-1,i)\setminus R(N-2,i),$$ and $$R(N,i,III):=R(N,i)\setminus R(N-1,i).$$ Note here the following obvious relations
\begin{align*}
& 2 R(N,i,I)  = R(N, i+1, I) \cup R(N,i+1, II),\\
& 2R(N,i,II) = R(N,i+1,III),\\
& 2R(N,i,III) \cap R(N, i+1)  =\emptyset.
\end{align*}

We denote by $\mathcal{N}(N,m,\varepsilon)$ the set of sequences $\omega$ such that for all $n\geq N$ in each $R(n,i,\ast)$, $i=0,\ldots,m$, $\ast\in\{I, II, III\}$ the frequency of 1's is between $1/2-\varepsilon$ and $1/2+\varepsilon$. By Lemma \ref{lem:subseq}, 
\[
{\cal N} \subset \bigcap_{\varepsilon>0} \bigcap_{m=1}^\infty \bigcup_{N=m+1}^\infty \mathcal{N}(N,m,\varepsilon).
\]

Similarly, let us denote by $A(\alpha, N, \varepsilon)$ the set of sequences $\omega$ such that for all $n\geq N$ we have
\[
\alpha - \varepsilon <2^{-n+1} \sum_{j=1}^{2^{n-1}} \omega_j \omega_{2j} < \alpha+\varepsilon.
\]
Then
\[
A_\alpha = \bigcap_{\varepsilon>0} \bigcup_{N=1}^\infty A(\alpha, N,\varepsilon).
\]

To obtain the upper bound, we will estimate from above the number of cylinders $C_{2^N}(\omega_1,\ldots,\omega_{2^N})$ needed to cover the set $\mathcal{N}(N,m,\varepsilon)\cap A(\alpha,N,\varepsilon)$.
Let us fix $N, m, \varepsilon$. To find the cylinders $C_{2^N}(\omega_1,\ldots,\omega_{2^N})$, we should determine for each position $1\leq n \leq 2^N$, which value ($0$ or $1$), $\omega_n$ will take. To this end, it would be convenient that we partition the positions from $1$ to $2^N$ by several classes according to the values of $\omega_n$ and that of the couple $(\omega_n, \omega_{2n})$. We will introduce the following notations concerning the number of positions in each such class.
For $i=1,\ldots,m$, $k_1, k_2\in \{0,1\}$, and $\ast\in \{I, II\}$, we denote
\[
X_{k_1k_2, \ast}^i(\omega) := \sharp \{n\in R(N,i-1,\ast): \omega_n = k_1, \omega_{2n} = k_2\}.
\]
For example, $X_{01,I}^1(\omega)$ denotes the number of odd positions $n$ smaller than $2^{N-2}$ such that $\omega_n=0, \omega_{2n}=1$. Similarly, let
\[
X_{k_1,\ast}^i(\omega) := \sharp \{n\in R(N,i,\ast); \omega_n = k_1\}.
\]
The following relations are obvious: for any $i$,
\begin{align}
X_{10,I}^i + X_{11,I}^i = X_{1,I}^{i-1} \label{eq:1}
\\
X_{00,I}^i + X_{01,I}^i = X_{0,I}^{i-1} \label{eq:2}
\\
X_{10,II}^i + X_{11,II}^i = X_{1,II}^{i-1} \label{eq:3}
\\
X_{00,II}^i + X_{01,II}^i = X_{0,II}^{i-1} \label{eq:4}
\\
X_{01,I}^i + X_{11,I}^i = X_{1,I}^{i} + X_{1,II}^{i} \label{eq:5}
\\
X_{00,I}^i + X_{10,I}^i = X_{0,I}^{i} + X_{0,II}^{i} \label{eq:6}
\\
X_{01,II}^i + X_{11,II}^i = X_{1,III}^{i}  \label{eq:7} 
\\
X_{00,II}^i + X_{10,II}^i = X_{0,III}^{i}. \label{eq:8}
\end{align}

Note that for a sequence $\omega\in \mathcal{N}(N,m,\varepsilon)$ the right hand sides in all these relations are in range $2^{N-3-i}\cdot (1-\varepsilon, 1+\varepsilon)$. Thus by \eqref{eq:1}, \eqref{eq:2}, \eqref{eq:5}, \eqref{eq:6}, we have 
\begin{equation} \label{eqn:nnorm}
\Big|X_{11,I}^i-X_{00,I}^i \Big| \leq \varepsilon \cdot 2^{N-1-i},
\end{equation}
and by \eqref{eq:3}, \eqref{eq:4}, \eqref{eq:7}, \eqref{eq:8}, we have  
\begin{equation} \label{eqn:nnorm2}
\Big|X_{11,II}^i-X_{00,II}^i \Big| \leq \varepsilon \cdot 2^{N-1-i}.
\end{equation}

Note that once we know the values of the sequence
\begin{equation} \label{sequence-X}
(X_{00,I}^i, X_{11,I}^i, X_{00,II}^i, X_{11,II}^i)_{i=1}^m,
\end{equation}
then by \eqref{eq:1}-\eqref{eq:4}, the values of $(X_{0,I}^i, X_{1,I}^i, X_{0,II}^i, X_{1,II}^i)_{i=1}^{m-1}$ are also determined. 

Let us start the counting of the possible $(\omega_1, \dots, \omega_{2^N})$. The idea is as follows: we will first describe the sequences that can appear in $\mathcal{N}(N,m,\varepsilon)$, starting with what can happen on the odd positions $2k-1$, after that what can happen on positions of the form $2(2k-1)$ provided that the odd positions are already decided, and so on. Finally we will go back and add another condition, for our sequences to belong to $A(\alpha,N,\varepsilon)$.

Now assume that we know the values of the sequence \eqref{sequence-X} of number of positions. We will count how many possible $(\omega_1, \dots, \omega_{2^N})$ we can have, based on the information of the  values of \eqref{sequence-X}. 

The values of $\{\omega_n: n\in R(N,0)\}$ can be chosen in no more than $2^{2^{N-1}}$ ways. 
After we have chosen $\{\omega_n:  n\in R(N,i-1)\}$, we can choose $\{\omega_n: n\in R(N,i)\}$ in no more than
\[
{X_{1,I}^{i-1} \choose X_{11,I}^i} \cdot {X_{0,I}^{i-1} \choose X_{00,I}^i} \cdot {X_{1,II}^{i-1} \choose X_{11,II}^i} \cdot {X_{0,II}^{i-1} \choose X_{00,II}^i} 
\]
ways. Finally, after we have chosen $\{\omega_n: n\in R(N,i)\}$ for all $i\leq m$, we will still have $2^{N-m-1}$ positions left, which we can choose in no more than $2^{2^{N-m-1}}$ ways. 

To continue our estimation, we will use the following fact: for $k,n\in \mathbb{N}, 0\leq k\leq n$, 
\begin{align}\label{eq:approx}
\begin{split}
\log {n\choose k} &= n \left(- \frac kn \log \frac kn - \frac {n-k}n \log \frac {n-k}n\right) + O(\log n)\\
&= n\cdot H\big(\frac kn\big)+O(\log n),
\end{split}
\end{align}
where we recall that $H(t)=-t\log t-(1-t)\log(1-t)$ is a concave analytic function on $[0,1]$.

Note that $X_{1,I}^{i-1}$ and $X_{1,I}^{i-1}$ are in the range $2^{N-3-i}\cdot (1-\varepsilon, 1+\varepsilon)$. Applying \eqref{eq:approx}, by \eqref{eqn:nnorm}, we have 
\[
\log {X_{1,I}^{i-1} \choose X_{11,I}^i} +\log {X_{0,I}^{i-1} \choose X_{00,I}^i} = 2 \log{2^{N-3-i} \choose X_{11,I}^i} + 2^{N-3-i}O(\varepsilon).
\]
Similarly, by \eqref{eqn:nnorm2}, we have
\[
\log {X_{1,II}^{i-1} \choose X_{11,II}^i} +\log {X_{0,II}^{i-1} \choose X_{00,II}^i} = 2 \log{2^{N-3-i} \choose X_{11,II}^i} + 2^{N-3-i}O(\varepsilon).
\]

Thus for fixed values of the sequence \eqref{sequence-X}, the logarithm of  the total number $Z$ of cylinders needed to cover the corresponding part of $\mathcal{N}(N,m,\varepsilon)$ is not larger than
\begin{align*}
& \log Z\big((X_{00,I}^i, X_{11,I}^i, X_{00,II}^i, X_{11,II}^i)_{i=1}^m\big)\\
 \leq & (2^{N-1}+2^{N-m-1}) \log 2 \\
 & + \sum_{i=1}^m \left(2 \log{2^{N-3-i} \choose X_{11,I}^i} 
 + 2 \log{2^{N-3-i} \choose X_{11,II}^i} + 2^{N-3-i}O(\varepsilon)\right).
\end{align*}
By \eqref{eq:approx}, applying the Jensen inequality for the concave function $H$, we get
\begin{align*}
&
\log Z\big((X_{00,I}^i, X_{11,I}^i, X_{00,II}^i, X_{11,II}^i)_{i=1}^m\big)\\
 \leq & (2^{N-1}+2^{N-m-1}) \log 2 \\
 &+ \sum_{i=1}^m 2^{N-i-1} \cdot H\left(\frac 1 {\sum_{i=1}^m 2^{N-i-1}} \cdot \sum_{i=1}^m 2^{N-i-2} \frac {X_{11,I}^i+X_{11,II}^i}{2^{N-i-3}}\right) \\
 &+ \sum_{i=1}^m 2^{N-i-3} \cdot O(\varepsilon).
\end{align*}
Hence,
\begin{align}\label{estimate-logZ}
\begin{split}
&
\log Z\big((X_{00,I}^i, X_{11,I}^i, X_{00,II}^i, X_{11,II}^i)_{i=1}^m\big)\\
 \leq & 2^{N-1} \log 2 + 2^{N-1} H\left(2^{-N+2} \sum_{i=1}^m (X_{11,I}^i+X_{11,II}^i)\right) \\
 &+ 2^N \cdot (O(\varepsilon + 2^{-m})).
 \end{split}
\end{align}

Moreover, there are no more than 
\begin{align}\label{eq:nb-choice}
\prod_{i=1}^m 2^{4(N-i-3)} < 2^{4mN}=o(2^{2^N})
\end{align} 
possible values of $(X_{00,I}^i, X_{11,I}^i, X_{00,II}^i, X_{11,II}^i)_{i=1}^m$. We remark here that the number of possibilities is much less, because of \eqref{eqn:nnorm} and \eqref{eqn:nnorm2}. But the estimate \eqref{eq:nb-choice} is enough for us. 


On the other hand, for all $\omega\in A(\alpha,N,\varepsilon)$,
\[
\left|2^{-N+1} \sum_{i=1}^m (X_{11,I}^i+X_{11,II}^i) - \alpha\right| < \varepsilon + 2^{-m}.
\]
Thus by the estimates \eqref{estimate-logZ} and \eqref{eq:nb-choice}, the logarithm of the number of cylinders $C_{2^N}(\omega_1,\ldots,\omega_{2^N})$ needed to cover the set $\mathcal{N}(N,m,\varepsilon)\cap A(\alpha,N,\varepsilon)$ is less than
\begin{align*}
&o({2^N}) +\log Z\big((X_{00,I}^i, X_{11,I}^i, X_{00,II}^i, X_{11,II}^i)_{i=1}^m\big)\\
\leq & 2^{N-1} \log 2 + 2^{N-1} \left(H(2\alpha) +O(\varepsilon + 2^{-m})\right)+ 2^N \cdot (O(\varepsilon + 2^{-m})).
\end{align*}
Dividing the above value by $2^N$, and passing with $m,N$ to infinity and with $\varepsilon$ to $0$, we finish the proof of the upper bound.

\section{Proofs of Theorem \ref{thm:freq} and Corollary \ref{cor}}\label{sec:4}


Given $p,q\in [0,1]$, let $\mu_{p,q}$ be the probability measure on $\Sigma$ constructed in Subsection \ref{subsec:measures}, by using the data
$(p_0, p_1):=(1-p, p)$ and 
\[
\begin{pmatrix}
   p_{00} & p_{01} \\
   p_{10} & p_{11} 
\end{pmatrix}
:=\begin{pmatrix}
   1-p & p \\
  1-q  & q 
\end{pmatrix}.
\]
%
%
%
%

The proof of Theorem \ref{thm:freq} is based on the following lemmas.
\begin{lem} \label{typical-2}
If $p=(2\theta-\alpha)/(2-\theta)$ and $q=\alpha / \theta$, then
\[
\mu_{p,q}(E_\theta\cap A_\alpha) =1.
\]
\end{lem}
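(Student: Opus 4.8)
The plan is to apply the machinery developed in Subsection \ref{subsec:measures} to the specific measure $\mu_{p,q}$ with the claimed values $p=(2\theta-\alpha)/(2-\theta)$ and $q=\alpha/\theta$, and to check that its generic sequences lie in $E_\theta\cap A_\alpha$. First I would compute the quantity $\xi$ from Corollary \ref{cor:lim-x-n}. With $(p_0,p_1)=(1-p,p)$ and the transition matrix having $p_{11}=q$, $p_{01}=p$, the formula gives
\[
\xi = \frac{p_1+p_{01}}{2-p_{11}+p_{01}} = \frac{p+p}{2-q+p} = \frac{2p}{2+p-q}.
\]
A direct substitution of $p=(2\theta-\alpha)/(2-\theta)$ and $q=\alpha/\theta$ should simplify this to $\xi=\theta$; this is the one genuine algebraic identity to verify, and it is the crux of the lemma, although it is only a routine (if slightly tedious) manipulation once one clears denominators. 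One also needs to check that $p,q\in[0,1]$ and that the nontriviality condition \eqref{cond:nontrivial} holds, i.e. $|p_{01}-p_{11}|=|p-q|\neq 2$, which is automatic since both lie in $[0,1]$; the hypotheses $\alpha\le\theta\le(2+\alpha)/3$ are exactly what is needed to keep $p,q\in[0,1]$ (for instance $p\ge 0\iff 2\theta\ge\alpha$ and $p\le1\iff 3\theta\le 2+\alpha$, while $q=\alpha/\theta\in[0,1]\iff\alpha\le\theta$).

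Granting $\xi=\theta$, the rest follows immediately from the already-established propositions. Proposition \ref{prop:lim-wk} gives that for $\mu_{p,q}$-almost every $\omega$,
\[
\lim_{n\to\infty}\frac1n\sum_{j=1}^n\omega_j = \xi = \theta,
\]
so $\mu_{p,q}(E_\theta)=1$. Proposition \ref{prop:wk-w2k} gives that for $\mu_{p,q}$-almost every $\omega$,
\[
\lim_{n\to\infty}\frac1n\sum_{j=1}^n\omega_j\omega_{2j} = \xi\cdot p_{11} = \theta q = \theta\cdot\frac{\alpha}{\theta} = \alpha,
\]
so $\mu_{p,q}(A_\alpha)=1$. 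Intersecting the two full-measure sets yields $\mu_{p,q}(E_\theta\cap A_\alpha)=1$, which is the assertion.

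The main obstacle is thus not conceptual at all: everything reduces to the preliminary lemmas of Section \ref{sec:pre} once $\xi$ is identified with $\theta$. The only point demanding care is the algebra showing $2p/(2+p-q)=\theta$ for the given $p,q$ — equivalently, that $(p,q)=((2\theta-\alpha)/(2-\theta),\,\alpha/\theta)$ is precisely the solution of the system $\xi=\theta$, $\xi p_{11}=\alpha$ obtained by back-solving — and the bookkeeping to confirm that the stated range of $(\alpha,\theta)$ keeps the probability vector and transition matrix legitimate (and that outside this range no such measure, indeed no such sequence, can exist, which matches the "otherwise $E_\theta\cap A_\alpha=\emptyset$" part of Theorem \ref{thm:freq} but is not needed for this lemma). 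I would present the $\xi$-computation as a short displayed calculation and then invoke Propositions \ref{prop:lim-wk} and \ref{prop:wk-w2k} verbatim.
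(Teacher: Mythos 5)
Your proposal is correct and follows essentially the same route as the paper: compute $\xi=2p/(2+p-q)=\theta$ from Corollary \ref{cor:lim-x-n}, then apply Propositions \ref{prop:lim-wk} and \ref{prop:wk-w2k} to get $\mu_{p,q}(E_\theta)=1$ and $\mu_{p,q}(A_\alpha)=1$ and intersect. The extra checks you include (that $p,q\in[0,1]$ on the stated range of $(\alpha,\theta)$ and that condition \eqref{cond:nontrivial} holds) are sensible bookkeeping the paper leaves implicit.
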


\begin{proof}

By Proposition \ref{prop:lim-wk}, $\mu_{p,q}$-almost surely
\begin{equation} \label{eq:freq}
\lim_{n\to\infty} {1 \over n}\sum_{k=1}^n\omega_k =\xi= {p_1+p_{01} \over 2-p_{11}+p_{01}}=  \frac {2p} {2+p-q}=\theta,
\end{equation}
where the last equality comes from  the choices of $p$ and $q$.
Thus $\mu_{p,q}(E_\theta)=1$.

On the other hand, by Proposition \ref{prop:wk-w2k}, for $\mu_{p,q}$-almost all sequence $\omega$,
\[
\lim_{n\to\infty}\frac 1 n \sum_{k=1}^n \omega_k \omega_{2k} = \xi \cdot p_{11} = \theta \cdot q= \alpha.
\]
By Lemma \ref{lem:PS}, we conclude $\mu_{p,q}(A_\alpha)=1$.

\end{proof}

\begin{lem} \label{entropy-measure-2}
For  $p=(2\theta-\alpha)/(2-\theta)$ and $q=\alpha / \theta$, we have
\[
h_{\mu_{p,q}}(\omega)=\big(1-{\theta \over 2}\big)H\Big({2\theta- \alpha \over 2-\theta}\Big) +{\theta \over 2}H\Big({\theta- \alpha \over \theta}\Big), \qquad \mu_{p,q}-a.e.
\]
\end{lem}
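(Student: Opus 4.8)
The plan is to apply Proposition \ref{prop:local-ent}, which gives a closed formula for the local entropy $h_{\mu_{p,q}}(\omega)$ that holds $\mu_{p,q}$-almost everywhere, and then simply substitute the specific data defining $\mu_{p,q}$ together with the prescribed values $p=(2\theta-\alpha)/(2-\theta)$ and $q=\alpha/\theta$. Thus the proof is essentially a computation; there is no real obstacle, only bookkeeping.

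First I would recall from Proposition \ref{prop:local-ent} that, for $\mu_{p,q}$-almost all $\omega$,
\[
h_{\mu_{p,q}}(\omega)
= -{1 \over 2} \Big(p_0\log p_0+p_1\log p_1+ (1-\xi)( p_{00} \log  p_{00}+ p_{01} \log p_{01}) + \xi( p_{10} \log  p_{10}+ p_{11} \log  p_{11})\Big),
\]
where, for the present choice $(p_0,p_1)=(1-p,p)$ and $(p_{00},p_{01};p_{10},p_{11})=(1-p,p;1-q,q)$, Corollary \ref{cor:lim-x-n} and \eqref{eq:freq} give $\xi = 2p/(2+p-q) = \theta$. Substituting these values, the first three terms combine into $-\tfrac12\big((1-p)\log(1-p)+p\log p\big)$ coming from the odd coordinates, plus $(1-\xi)\big((1-p)\log(1-p)+p\log p\big)$ from the $i=0$ transition rows, and the last pair contributes $\xi\big((1-q)\log(1-q)+q\log q\big)$. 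Recognising $-(1-p)\log(1-p)-p\log p = H(p)$ and $-(1-q)\log(1-q)-q\log q = H(q)$, the formula collapses to
\[
h_{\mu_{p,q}}(\omega) = {1 \over 2}H(p) + {1 \over 2}(1-\theta)H(p)\cdot 2 \ \text{(rewritten)} \ = \big(1-{\theta \over 2}\big)H(p) + {\theta \over 2}H(q),
\]
where I would be careful to collect the $H(p)$ coefficient correctly: the $\tfrac12$ from the odd positions plus $\tfrac12(1-\xi)\cdot 2 = (1-\theta)$ from the transition rows acting on $0$-predecessors gives $\tfrac12 + \tfrac12(1-\theta)\cdot\!$—here one must track that the factor $\tfrac12$ outside multiplies $(1-\xi)$ times the bracket, so the total coefficient of $H(p)$ is $\tfrac12\cdot 1 + \tfrac12\cdot(1-\theta) = 1-\tfrac{\theta}{2}$, and the coefficient of $H(q)$ is $\tfrac12\cdot\xi = \tfrac{\theta}{2}$.

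Finally I would substitute $p=(2\theta-\alpha)/(2-\theta)$ and $q=\alpha/\theta$, observing that $1-p = (2-3\theta+\alpha)/(2-\theta)$ and $1-q = (\theta-\alpha)/\theta$, so that $H(p) = H\big((2\theta-\alpha)/(2-\theta)\big)$ and $H(q) = H\big((\theta-\alpha)/\theta\big)$ (using $H(t)=H(1-t)$), which yields exactly
\[
h_{\mu_{p,q}}(\omega) = \big(1-{\theta \over 2}\big)H\Big({2\theta- \alpha \over 2-\theta}\Big) +{\theta \over 2}H\Big({\theta- \alpha \over \theta}\Big), \qquad \mu_{p,q}-a.e.
\]
The only point requiring a word of care is that these substitutions stay inside $[0,1]$, which holds precisely under the hypothesis $\alpha \le \theta \le (2+\alpha)/3$ from Theorem \ref{thm:freq}; I would note this so that $H$ is evaluated in its domain and the condition \eqref{cond:nontrivial} making $\mu_{p,q}$ nontrivial is in force. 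No step is genuinely hard; the main thing to get right is the arithmetic of combining the logarithmic terms into the two entropy functions with the correct weights $1-\theta/2$ and $\theta/2$.
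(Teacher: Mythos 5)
Your proposal is correct and follows essentially the same route as the paper: apply Proposition \ref{prop:local-ent} with $\xi=2p/(2+p-q)=\theta$, collect the odd-position and transition terms into $\bigl(1-\tfrac{\theta}{2}\bigr)H(p)+\tfrac{\theta}{2}H(q)$, and substitute $p=(2\theta-\alpha)/(2-\theta)$, $q=\alpha/\theta$ (using $H(t)=H(1-t)$ for the $q$ term). The only blemish is the garbled intermediate display in your coefficient bookkeeping, but the sentence that follows fixes the weights correctly, so the argument stands.
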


\begin{proof}
By Proposition \ref{prop:local-ent}, we have for $\mu_{p,q}$ almost all $\omega\in \Sigma$,
\begin{align*}
h_{\mu_{p,q}}(\omega)= 
& -{1 \over 2} \Big( (1-p) \log (1-p)+p \log p+(1-\theta)(1-p) \log (1-p) \\
&+(1-\theta)p \log p +\theta(1-q) \log (1-q)+\theta q \log q \Big)\\
=& {1 \over 2}   \Big((2-\theta)H(p)+ \theta H(q)\Big)\\
=& \big(1-{\theta \over 2}\big)H\Big({2\theta- \alpha \over 2-\theta}\Big) +{\theta \over 2}H\Big({\theta- \alpha \over \theta}\Big).
\end{align*}
\end{proof}

\begin{lem} \label{entropy-measure-all}
If $\theta\notin [\alpha, (2+\alpha)/3]$ we have $E_\theta\cap A_\alpha=\emptyset$, otherwise
for  $p=(2\theta-\alpha)/(2-\theta)$ and $q=\alpha / \theta$, we have for all $\omega\in E_\theta\cap A_\alpha$,
\[
h_{\mu_{p,q}}(\omega)=\big(1-{\theta \over 2}\big)H\Big({2\theta- \alpha \over 2-\theta}\Big) +{\theta \over 2}H\Big({\theta- \alpha \over \theta}\Big).
\]

\end{lem}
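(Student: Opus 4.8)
The plan is to prove Lemma \ref{entropy-measure-all} in two parts: first the emptiness claim, then the pointwise value of the local entropy on $E_\theta\cap A_\alpha$ under the hypothesis $\theta\in[\alpha,(2+\alpha)/3]$.

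\textbf{Step 1: emptiness when $\theta\notin[\alpha,(2+\alpha)/3]$.} Suppose $\omega\in E_\theta\cap A_\alpha$. On one hand, $\omega_k\omega_{2k}\le\omega_k$ gives $\alpha\le\theta$ after averaging, so $\theta\ge\alpha$. On the other hand, I want to show $\theta\le(2+\alpha)/3$. The idea is to control the frequency of $1$'s among the even positions in terms of the frequency of the pattern $11$. Split $\{1,\dots,n\}$ into odd and even positions; the odd ones carry, by a dyadic-block argument (exactly as in Lemma \ref{lem:subseq}/Wall's theorem style reasoning, or more elementarily using $x_n$), a density of $1$'s equal to $2\theta-(\text{density of }1\text{'s on even positions})$, and each even position $2k$ with $\omega_{2k}=1$ forces a contribution bounded by counting: the number of $k\le n/2$ with $\omega_{2k}=1$ but $\omega_k=0$ is at most the number of $k$ with $\omega_k=0$, whose density is $1-\theta$. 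Writing $\beta$ for the limiting density of $1$'s on even positions, one gets $\beta=\alpha+(\text{density of }11\text{ with first coord }0)\le\alpha+(1-\theta)$ while simultaneously the overall density $\theta=\tfrac12(\text{odd density})+\tfrac12\beta$ and odd density $+\,\beta$-relation yields $\theta\le(2+\alpha)/3$. I would phrase this cleanly via the quantities $x_n(\omega)$ and the relation $x_{2n}\approx\tfrac12\cdot(\text{odd-density})+\tfrac12 x_n(\ldots)$, mimicking Lemma \ref{lem:LLN}, but now for the deterministic sequence $\omega$ rather than $\mu$-a.e.; the existence of the limits is guaranteed by $\omega\in E_\theta$ together with Lemma \ref{lem:PS}.

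\textbf{Step 2: the local entropy value on all of $E_\theta\cap A_\alpha$.} The point is that Lemma \ref{entropy-measure-2} computes $h_{\mu_{p,q}}(\omega)$ for $\mu_{p,q}$-a.e.\ $\omega$, but here I need it for \emph{every} $\omega\in E_\theta\cap A_\alpha$, which is a much larger (and $\mu_{p,q}$-null, in general) set. The mechanism is that the formula in Proposition \ref{prop:local-ent}-type computations depends on $\omega$ only through the limiting frequencies: the density $x_n(\omega)\to\xi$ of $1$'s (on the relevant blocks) and the density of the pattern $11$. Concretely, $-\tfrac1n\log\mu_{p,q}(C_n(\omega))$ equals a fixed linear combination of the digit-counts $N_{0,\mathrm{odd}},N_{1,\mathrm{odd}},N_{00},N_{01},N_{10},N_{11}$ on $\omega$ (divided by $n$). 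For a sequence in $E_\theta\cap A_\alpha$ these normalized counts converge — $N_{1,\mathrm{odd}}/n\to\theta$-related, $N_{11}/n\to\alpha$-related — and one must also pin down $N_{01}/n$ and $N_{10}/n$ individually. Here the constraints $\omega\in E_\theta$ (giving the total density $\theta$) and $\omega\in A_\alpha$ (giving $N_{11}$) plus the combinatorial identities analogous to \eqref{eq:1}–\eqref{eq:8} (which hold exactly for any sequence, being just bookkeeping on positions) force all of $N_{ij}/n$ and $N_{i,\mathrm{odd}}/n$ to their $\mu_{p,q}$-a.e.\ values. Reassembling via Lemma \ref{lem:PS} to pass from dyadic subsequences $C_{2^k n}$ to all $n$ (exactly as in the proof of Proposition \ref{prop:local-ent}), one obtains that $h_{\mu_{p,q}}(\omega)$ equals the same expression as in Lemma \ref{entropy-measure-2}, now for every $\omega\in E_\theta\cap A_\alpha$.

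\textbf{Main obstacle.} The crux is Step 2: showing that membership in $E_\theta\cap A_\alpha$ alone determines \emph{all} the relevant frequencies $N_{ij}(\omega_1^n)/n$, not merely $N_1/n$ and $N_{11}/n$. A priori a sequence could have the right overall density of $1$'s and of the pattern $11$ while distributing the $11$'s in a skewed way across dyadic scales, so that $x_n(\omega)$ fails to converge or converges to the ``wrong'' value. Resolving this requires the self-similar identity relating $x_{2n}$ to $x_n$ (the deterministic analogue of Lemma \ref{lem:LLN}), whose error term is controlled well enough to apply Lemma \ref{lem:PS}; one then gets $x_n(\omega)\to\xi$ automatically, after which the remaining frequencies follow from the identities \eqref{eq:1}–\eqref{eq:8} and the hypotheses. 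Once that convergence is in hand, the entropy computation is the same algebra as in Lemma \ref{entropy-measure-2}.
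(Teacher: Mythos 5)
Your Step 2 contains the real gap. It rests on the claim that membership in $E_\theta\cap A_\alpha$, together with the bookkeeping identities of type \eqref{eq:1}--\eqref{eq:8}, forces \emph{all} the frequencies $N_{i,{\rm odd}}/n$, $N_{00}/n$, $N_{01}/n$, $N_{10}/n$ to their $\mu_{p,q}$-typical values. That claim is false: the two constraints leave these frequencies underdetermined. Take $\theta=1/2$, $\alpha=1/4$, so $p=q=1/2$ and the $\mu_{p,q}$-typical frequency of $1$'s on odd positions is $1/2$; one can construct $\omega\in E_{1/2}\cap A_{1/4}$ whose odd positions carry $1$'s with frequency $3/5$ and even positions with frequency $2/5$, the children of $1$-parents carrying $1$'s with conditional frequency $1/2$ and the children of $0$-parents with conditional frequency $3/10$. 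All the scale-consistency identities hold automatically (they are identities for every sequence), the total density is $1/2$ and the $11$-density is $1/4$, yet $N_{1,{\rm odd}}/n$, $N_{01}/n$, $N_{10}/n$ do not tend to the typical values. Your proposed repair -- a deterministic analogue of Lemma \ref{lem:LLN} giving $x_n(\omega)\to\xi$ -- does not help: the convergence $x_n(\omega)\to\theta$ is immediate from $\omega\in E_\theta$ (no self-similar identity is needed), and it says nothing about the finer odd/even and $01/10$ frequencies that your computation of $-\frac1n\log\mu_{p,q}(C_n(\omega))$ would require.

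The lemma is nevertheless true, and the paper's proof exploits precisely the feature your argument misses: the special choice $p_{00}=p_0=1-p$, $p_{01}=p_1=p$. Because of it, a position $j\in(n,2n]$ contributes $\log q$ or $\log(1-q)$ exactly when $j=2k$ with $\omega_k=1$, and $\log p$ or $\log(1-p)$ in every other case (odd $j$, or $j=2k$ with $\omega_k=0$). Hence $\log\mu_{p,q}(C_{2n}(\omega))-\log\mu_{p,q}(C_n(\omega))$ equals $M\log q+(N_1(\omega_{n/2}^n)-M)\log(1-q)+(N_1(\omega_n^{2n})-M)\log p+(n-N_1(\omega_{n/2}^n)-N_1(\omega_n^{2n})+M)\log(1-p)$, where $M$ is the number of $k\in(n/2,n]$ with $\omega_k=\omega_{2k}=1$. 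This depends on $\omega$ only through $N_1(\omega_{n/2}^n)\approx\theta n/2$, $N_1(\omega_n^{2n})\approx\theta n$ and $M\approx\alpha n/2$, which \emph{are} controlled for every $\omega\in E_\theta\cap A_\alpha$; no determination of $N_{i,{\rm odd}}$, $N_{01}$, $N_{10}$ is needed. One then telescopes over dyadic scales and applies Lemma \ref{lem:PS}, as in Proposition \ref{prop:local-ent}. Your Step 1 is essentially the paper's argument (the inequalities $M\le N_1(\omega_{n/2}^n)$ and $N_1(\omega_n^{2n})-M\le n/2+N_0(\omega_{n/2}^n)$ give $\alpha\le\theta\le(2+\alpha)/3$), but it should be phrased with finite-$n$ counts or upper/lower limits: the limiting densities on odd or even positions need not exist for a general point of $E_\theta\cap A_\alpha$, for exactly the reason exhibited above, and Lemma \ref{lem:subseq} is unavailable since $\omega$ is not assumed normal.
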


\begin{proof}
Observe that for any $\omega\in E_\theta\cap A_\alpha$, for any small $\epsilon>0$, for $n$ large enough, we have
\begin{align*}
N_1(\omega_{n/2}^n) &\in \Big[{\theta n \over 2}(1-\epsilon),  \ {\theta n \over 2}(1+\epsilon)\Big]\\
N_1(\omega_n^{2n}) &\in \Big[{\theta n}(1-\epsilon), \ {\theta n}(1+\epsilon)\Big]\\
N_{11}(\omega_n^{2n}) &\in \Big[{\alpha n \over 2}(1-\epsilon), \ {\alpha n \over 2}(1+\epsilon)\Big].
\end{align*}
The obvious inequalities $$N_{11}(\omega_n^{2n}) \leq N_1(\omega_{n/2}^n) $$ and $$N_1(\omega_n^{2n})-N_{11}(\omega_n^{2n}) \leq n/2 + N_0(\omega_{n/2}^n) =n-N_1(\omega_{n/2}^n)$$ imply $\theta\in [\alpha, (2+\alpha)/3]$. Furthermore, we have
\begin{align*}
&\log \mu_{p,q}(C_{2n}(\omega)) - \log \mu_{p,q}(C_n(\omega))\\
=& {N_{11}(\omega_n^{2n})}\log q +\big({N_1(\omega_{n/2}^n)-N_{11}(\omega_n^{2n})}\big)\log (1-q)\\
&+\big({N_1(\omega_n^{2n})-N_{11}(\omega_n^{2n})}\big)\log p \\
&+\big({n-N_1(\omega_{n/2}^n)-N_1(\omega_n^{2n})+N_{11}(\omega_n^{2n})}\big)\log (1-p)\\
=& n\left( \big(1-{\theta \over 2}\big)H\Big({2\theta- \alpha \over 2-\theta}\Big) +{\theta \over 2}H\Big({\theta- \alpha \over \theta}\Big) + \epsilon \cdot O(1)\right).
\end{align*}
Hence, by the same argument as in the proof of Proposition \ref{prop:local-ent}, 
we have
for all $\omega\in E_\theta\cap A_\alpha$,
\[
h_{\mu_{p,q}}(\omega)=\lim_{n\to\infty} {-\log \mu_{p,q}(C_n(\omega)) \over n }=\big(1-{\theta \over 2}\big)H\Big({2\theta- \alpha \over 2-\theta}\Big) +{\theta \over 2}H\Big({\theta- \alpha \over \theta}\Big).
\]

\end{proof}

\begin{proof}[Proof of Theorem \ref{thm:freq}]
We note that by Theorem \ref{lem:Bi}, the lower bound of Theorem \ref{thm:freq} follows from Lemmas \ref{typical-2} and \ref{entropy-measure-2} and the upper bound follows from Lemma \ref{entropy-measure-all}. We thus have completed our proof.
\end{proof}

\begin{proof}[Proof of Corollary \ref{cor}]

Note that for any $\theta$, $E_\theta \cap A_\alpha \subset  A_\alpha $. Thus, if $h_{\rm top}(E_\theta \cap A_\alpha)=h_{\rm top}(A_\alpha)$ for some $\theta$, then this $\theta$ is the maximal point for the entropy formula of $h_{\rm top}(E_\theta \cap A_\alpha)$. Remark that the entropy formula of $h_{\rm top}(E_\theta \cap A_\alpha)$ in Theorem \ref{thm:freq} is analytic and concave with respect to the variable $\theta$ and the partial derivative
\[
{\partial h_{\rm top}(E_\theta \cap A_\alpha) \over \partial \theta}= {1 \over 2} \log {\theta(2-3\theta+\alpha)^3 \over (2\theta-\alpha)^2(\theta-\alpha)(2-\theta) }=0
\]
if and only if (\ref{eq:max}) holds. We then have proved the first assertion.

By Theorems \ref{thm:KPS} and \ref{thm:PS},  $h_{\rm top}(A)=h_{\rm top}(A_0)$. 
By Theorem \ref{thm:freq}, $h_{\rm top}(E_\theta \cap A)=h_{\rm top}(E_\theta \cap A_0)$. Then the second assertion follows by taking $\alpha=0$.
\end{proof}

%
%

\section*{Acknowlegdements} We thank the referees for the valuable remarks and suggestions which significantly improves the presentation of the paper. We thank one of the referees  for pointing out us the reference \cite{W49} of Wall's Ph.D. thesis where we can find the original version of Lemma \ref{lem:subseq}.

\bibliographystyle{alpha}

\begin{thebibliography}{PSSS12}

\bibitem[ABC19]{ABC} C.  Aistleitner, V. Becher and O. Carton, 
 {\em Normal numbers with digit dependencies}, Trans. Amer. Math. Soc., {\bf 372} (2019), 4425--4446.
 
\bibitem[B61]{Bi61} P. Billingsley, {\em Hausdorff dimension in probability theory. II}, Illinois J.
Math., {\bf 5}, 291--298.

\bibitem[B73]{B73}
R. Bowen.
\newblock {\em Topological entropy for noncompact sets},
\newblock  Trans. Amer. Math. Soc., {\bf 184}:125--136, 1973.

\bibitem[F90]{F90} K. Falconer, {\it Fractal Geometry, Mathematical Foundations and Application}, Wiley, 1990.


\bibitem[FLM12]{FLM} A.H. Fan, L.M. Liao and J.H. Ma, {\em Level sets of multiple ergodic averages},
\newblock Monatsh. Math. \textbf{168} (2012), 17--26.





\bibitem[FSW16]{FSW16} A.H. Fan, J. Schmeling and M. Wu, {\em Multifractal analysis of some multiple ergodic averages}, Adv. Math. \textbf{295} (2016), 271--333.


\bibitem[K73]{K73} T. Kamae,  {\em Subsequences of normal sequences}, Israel J. Math. {\bf 16} (1973), 121-149.

\bibitem[KPS12]{KPS12}
 R. Kenyon, Y. Peres and B. Solomyak,  {\em Hausdorff dimension for fractals invariant under the multiplicative integers},  Ergodic Theory and Dynamical Systems,
\textbf{32} (2012), no. 5,  1567--1584.

\bibitem[K12]{K12}
 Y. Kifer, {\em A nonconventional strong law of large numbers and fractal dimensions of some multiple recurrence sets}, Stoch. Dyn., {\bf 12} (3) (2012),
21p.

\bibitem[MW08]{MW08} J.H. Ma, Z.Y. Wen, {\em A Billingsley type theorem for Bowen entropy}, C. R. Math. Acad. Sci. Paris {\bf 346} (2008), no. 9-10, 503--507.

\bibitem[PS12]{PeSo12}
 Y. Peres and B. Solomyak,  {\em Dimension spectrum for a nonconventional ergodic average},  Real Analysis Exchange, \textbf{37} (2011), no. 2, 375-388. 

\bibitem[W49]{W49} D. D. Wall, Normal Numbers. Ph.D. thesis,
University of California, Berkeley, California, USA, 1949.

\end{thebibliography}

\end{document}